\documentclass{article}
\usepackage{geometry}
\usepackage{amsmath,amssymb,amsthm,mathtools}
\usepackage{microtype}
\usepackage{booktabs,array,longtable}
\usepackage{enumitem}
\usepackage{graphicx}
\usepackage{xcolor}
\usepackage{tikz}
\usetikzlibrary{arrows.meta,positioning,calc}
\usepackage{hyperref}
\usepackage[nameinlink,capitalise,noabbrev]{cleveref}
\usepackage{url}
\usepackage{authblk}
\usepackage{setspace}
\hypersetup{colorlinks=true,linkcolor=blue!55!black,citecolor=blue!55!black,urlcolor=blue!55!black}
\setlist[itemize]{leftmargin=2em,itemsep=2pt,topsep=4pt}
\setlist[enumerate]{leftmargin=2.2em,itemsep=2pt,topsep=4pt}

\newtheorem{theorem}{Theorem}[section]
\newtheorem{proposition}[theorem]{Proposition}
\newtheorem{corollary}[theorem]{Corollary}

\theoremstyle{definition}
\newtheorem{definition}[theorem]{Definition}
\newtheorem{example}[theorem]{Example}
\newtheorem{algorithm}[theorem]{Algorithm}
\theoremstyle{remark}
\newtheorem{remark}[theorem]{Remark}

\newcommand{\Pset}{\mathcal P}
\newcommand{\taua}{\tau_{\mathfrak a}}

\newcommand{\cla}{\operatorname{cl}_{\mathfrak a}}
\newcommand{\clainf}{\operatorname{cl}_{\mathfrak a}^{\infty}}
\newcommand{\inta}{\operatorname{int}_{\mathfrak a}}
\newcommand{\Sa}{S_{\mathfrak a}}
\newcommand{\Sainf}{S_{\mathfrak a}^{\infty}}
\newcommand{\Ka}{K_{\mathfrak a}}
\newcommand{\preca}{\preccurlyeq_{\mathfrak a}}
\newcommand{\eqa}{\equiv_{\mathfrak a}}
\newcommand{\LQ}{\mathsf L_{\mathfrak a}(X)}
\newcommand{\upar}{\mathord{\uparrow}_{\mathfrak a}}
\newcommand{\downar}{\mathord{\downarrow}_{\mathfrak a}}

\newcommand{\Up}{\operatorname{Up}}

\title{\textbf{Levine Equivalence in Aura Topological Spaces:}\\
Reachability, Alexandrov Structure, and Quotient Frames}
\author[]{S. M. Elsayed}
\affil[]{Department of Mathematics, Faculty of Science, Aswan University, Egypt\\\texttt{profsalah55@aswu.edu.eg}}
\date{}

\begin{document}
\maketitle
\begin{abstract} In this paper, we study Levine equivalence in aura topological spaces. We show that the aura topology is determined by the reachability preorder induced by the scope function and is therefore an Alexandrov topology. We prove that the aura-Levine hull has the explicit representation $K_{\mathfrak a}(A)=\uparrow_{\mathfrak a}A=S_{\mathfrak a}^{\infty}(A)$, and hence two subsets are aura-Levine equivalent if and only if they have the same eventual forward spread. We also describe the quotient structure of the equivalence classes, characterize separation properties through the reachability preorder, classify scope functions that induce the same Levine equivalence, and examine preservation under aura-continuous mappings. For finite aura spaces, we provide an explicit procedure for deciding aura-Levine equivalence using the reflexive-transitive closure of the scope relation. \end{abstract}

\noindent\textbf{Keywords.} Levine equivalence; aura topological space; Alexandrov topology; reachability preorder; specialization preorder; quotient frame; Heyting algebra; finite topology.

\medskip
\noindent\textbf{MSC 2020.} 54A05, 54A10, 54D10, 06D22, 05C20, 03E72.

\section{Introduction}
General topology has been substantially enriched by introducing generalized forms of open and closed sets and by modifying the families through which subsets of a space are observed. Classical examples include semi-open sets, generalized closed sets, nearly open sets, preopen sets, $\alpha$-continuous mappings, and $\beta$-open sets \cite{Levine1963,Levine1970,Njaastad1965,CrossleyHildebrand1972,Hamlett1975,Mashhour1982,ReillyVamanamurthy1985,AbdElMonsef1983}. Related approaches based on ideals, grills, primal structures, and associated closure operators have further enlarged the range of topological structures used to distinguish subsets \cite{JankovicHamlett1990,RoyMukherjee2007,Acharjee2025,AlOmariAlqahtani2023,Alghamdi2024,Alqahtani2024}. These developments show the continuing importance of generalized families of open sets in determining how subsets of a space are observed and distinguished. In this broader context, it is natural to ask when two subsets are indistinguishable with respect to the chosen family of open sets, a question formalized by Levine's equivalence relation.

Levine \cite{Levine1971} introduced an equivalence relation on subsets of a topological space by declaring two sets equivalent whenever they are contained in exactly the same open sets. More precisely, for subsets $A,B$ of a topological space $(X,\tau)$,
\[
A \equiv B \quad \Longleftrightarrow \quad A \subseteq U \iff B\subseteq U
\]
for every $U \in \tau$. This relation provides a natural way of identifying subsets that are indistinguishable from the point of view of the topology. Levine equivalence has subsequently been considered in several generalized settings, including soft topologies \cite{Elsayed2023}, intuitionistic fuzzy topologies \cite{ElsayedAlShami2025}, and supra topologies \cite{AlShamiAzzam2026}.

Aura topological spaces were introduced by A\c{c}{\i}kg\"oz
\cite{AcikgozAura2026}. An aura topological space is a triple
$(X,\tau,\mathfrak a)$, where $(X,\tau)$ is a topological space and
\[
\mathfrak a:X\longrightarrow \tau
\]
is a scope function satisfying
\[
x\in\mathfrak a(x)
\]
for every $x\in X$. The scope function determines an associated aura
topology $\tau_{\mathfrak a}$ together with natural interior- and
closure-type operators. A\c{c}{\i}kg\"oz proved, among other results,
that the iterated aura closure gives rise to a Kuratowski closure whose
associated topology coincides with $\tau_{\mathfrak a}$, and also
established the equivalence of the aura-$T_1$ and aura-$T_2$
separation axioms \cite{AcikgozAura2026}. Further developments of the
framework include compactness and connectedness, ideal-aura spaces,
and fuzzy and soft versions
\cite{AcikgozCompact2026,AcikgozIdeal2026,AcikgozFuzzyAura2026,
AcikgozSoftAura2026}

The purpose of the present paper is to study Levine equivalence in the setting of aura topological spaces and, in particular, to reveal the order-theoretic mechanism underlying the framework. We show that the scope function generates a canonical reachability preorder and that the aura topology is exactly the Alexandrov topology of its upward-closed subsets. This representation provides explicit descriptions of the aura-Levine hull, the iterated aura closure, the separation axioms, and the quotient by Levine equivalence, and permits these notions to be studied uniformly through reachability. For this purpose, we consider the relation
\[
xR_{\mathfrak a}y \quad \Longleftrightarrow \quad y \in \mathfrak a(x),
\]
and denote its reflexive-transitive closure by $\preca$. One of the main observations of the paper is that the aura topology is completely determined by this preorder. In fact,
\[
\tau_{\mathfrak a} = \Up(X,\preca),
\]
so every aura topology is an Alexandrov topology.

This representation makes it possible to describe aura-Levine equivalence explicitly. We show that the canonical hull of a subset $A$ is
\[
K_{\mathfrak a}(A) = \uparrow_{\mathfrak a}A = S_{\mathfrak a}^{\infty}(A),
\]
and consequently
\[
A \equiv_{\mathfrak a}B \quad \Longleftrightarrow \quad
K_{\mathfrak a}(A) = K_{\mathfrak a}(B)
\quad \Longleftrightarrow \quad S_{\mathfrak a}^{\infty}(A) = S_{\mathfrak a}^{\infty}(B).
\]
Thus, aura-Levine equivalence can be understood entirely in terms of eventual reachability induced by the scope function.

We further study the quotient determined by this equivalence and show that it is naturally related to the frame of aura-open sets. Separation properties are characterized through the reachability preorder, and scope refinement and scope functions producing the same Levine equivalence are investigated. We also study the behavior of aura-Levine equivalence under aura-continuous mappings. Finally, for finite aura spaces, we give an explicit procedure for deciding whether two subsets are aura-Levine equivalent by computing the reflexive-transitive closure of the scope relation.

The results provide a direct connection between Levine equivalence, aura topology, and the preorder generated by the scope function, and show that the reachability structure gives a convenient framework for studying observational equivalence in aura topological spaces.

The remainder of the paper is organized as follows. Section \ref{sec:2} recalls the basic notions of Levine equivalence and aura topological spaces. Section \ref{sex:3} develops the reachability preorder associated with the scope function and establishes the Alexandrov structure of the aura topology. Section \ref{sex:4} introduces aura-Levine equivalence and studies its canonical hull representation. Section \ref{sex:5} investigates the quotient structure and its frame and Heyting properties. Section \ref{sex:6} discusses point indistinguishability and separation properties. Section \ref{sex:7} studies scope refinement and classifies scope functions that induce the same Levine equivalence. Section \ref{sex:8} considers aura-continuous mappings and the induced behavior on equivalence classes. Section \ref{sex:9} presents the finite case together with an algorithm for deciding aura-Levine equivalence and a detailed example.

\section{Preliminaries} \label{sec:2}

Throughout, $X$ is nonempty and $\Pset(X)$ denotes its power set.  For a relation $R$ on $X$, $R^*$ denotes the reflexive--transitive closure of $R$.

\subsection{Levine equivalence}

\begin{definition}[Levine \cite{Levine1971}]
Let $(X,\sigma)$ be a topological space.  For $A,B\subseteq X$, write $A\equiv_\sigma B$ when
\[
 A\subseteq U\quad\Longleftrightarrow\quad B\subseteq U
 \qquad\text{for every }U\in\sigma.
\]
The \emph{Levine hull} of $A$ is
\[
 K_\sigma(A)=\bigcap\{U\in\sigma:A\subseteq U\}.
\]
\end{definition}

Levine showed that $A\equiv_\sigma B$ iff $K_\sigma(A)=K_\sigma(B)$, that $K_\sigma$ is a Kuratowski closure operator, and that $K_\sigma(A)$ is the largest subset equivalent to $A$ \cite{Levine1971}.  The unusual feature is that $K_\sigma$ is built from intersections of \emph{open} supersets rather than closed supersets.  In a general topology $K_\sigma(A)$ need not itself be open.  This distinction disappears in Alexandrov spaces, and that observation will be decisive below.

\subsection{Aura topological spaces}

\begin{definition}[A\c{c}{\i}kg\"oz \cite{AcikgozAura2026}]
Let $(X,\tau)$ be a topological space.  A map $\mathfrak a:X\to\tau$ is a \emph{scope function} if $x\in\mathfrak a(x)$ for every $x\in X$.  The triple $(X,\tau,\mathfrak a)$ is an \emph{aura topological space}.  For $A\subseteq X$, set
\[
 \cla(A)=\{x:\mathfrak a(x)\cap A\neq\varnothing\},
 \qquad
 \inta(A)=\{x\in A:\mathfrak a(x)\subseteq A\}.
\]
A set $U$ is \emph{$\mathfrak a$-open} if $\inta(U)=U$; the family of all such sets is denoted by $\taua$.
\end{definition}

The aura paper proves that $\taua$ is a topology with $\taua\subseteq\tau$, and that $\cla$ is extensive, monotone and finitely additive but need not be idempotent \cite{AcikgozAura2026}.  Its iterates are
\[
 \cla^0(A)=A,\qquad \cla^{n+1}(A)=\cla(\cla^n(A)),
 \qquad
 \clainf(A)=\bigcup_{n\ge0}\cla^n(A).
\]
A forward-spread operator is also natural in the aura setting:
\[
 \Sa(A)=\bigcup_{x\in A}\mathfrak a(x),\qquad
 \Sa^0(A)=A,\qquad \Sa^{n+1}(A)=\Sa(\Sa^n(A)),\qquad
 \Sainf(A)=\bigcup_{n\ge0}\Sa^n(A).
\]
The distinction between $\cla$ and $\Sa$ is directional: $\cla$ collects points whose scopes meet $A$, while $\Sa$ collects points lying in scopes emanating from $A$.

\begin{definition}[Scope relation and reachability]
For an aura space $(X,\tau,\mathfrak a)$ define
\[
 xR_{\mathfrak a}y \iff y\in\mathfrak a(x).
\]
Its reflexive--transitive closure is denoted by $\preca$.  For $A\subseteq X$ put
\[
 \upar A=\{y:\exists x\in A,\ x\preca y\},\qquad
 \downar A=\{x:\exists y\in A,\ x\preca y\}.
\]
\end{definition}

Because $x\in\mathfrak a(x)$, $R_{\mathfrak a}$ is reflexive, so $\preca$ is a preorder.  We shall call it the \emph{aura reachability preorder}.
\section{The hidden Alexandrov structure of aura topology} \label{sex:3}

The following theorem is the structural starting point of the paper.

\begin{theorem} \label{thm:alex}
For every aura space $(X,\tau,\mathfrak a)$,
\[
 \taua=\operatorname{Up}(X,\preca)
 =\{U\subseteq X: x\in U,\ x\preca y\Rightarrow y\in U\}.
\]
Consequently, $\taua$ is an Alexandrov topology, i.e. it is closed under arbitrary intersections.
\end{theorem}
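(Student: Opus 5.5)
We prove the set equality by double inclusion. Then we read off closure under arbitrary intersections from the order-theoretic description.

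The first observation is that, by the definition of $\inta$, a set $U$ is $\mathfrak a$-open exactly when $\mathfrak a(x)\subseteq U$ for every $x\in U$. Indeed, $\inta(U)\subseteq U$ always holds, so $\inta(U)=U$ says precisely that every $x\in U$ satisfies $\mathfrak a(x)\subseteq U$. In terms of the scope relation, this means that $U$ is closed under one step of $R_{\mathfrak a}$: if $x\in U$ and $xR_{\mathfrak a}y$, then $y\in U$. So $\taua$ is the family of $R_{\mathfrak a}$-forward-closed sets, and the theorem reduces to the following claim: a set is forward closed under $R_{\mathfrak a}$ if and only if it is forward closed under the reflexive--transitive closure $\preca$.

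For the inclusion $\operatorname{Up}(X,\preca)\subseteq\taua$, note that $R_{\mathfrak a}\subseteq\preca$. Hence if $U$ is $\preca$-upward closed and $x\in U$, every $y\in\mathfrak a(x)$ satisfies $x\preca y$, so $y\in U$. This gives $\mathfrak a(x)\subseteq U$, and $U$ is $\mathfrak a$-open.

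For the converse, let $U\in\taua$, $x\in U$ and $x\preca y$. By the standard description of the reflexive--transitive closure, either $y=x$ or there is a finite chain
\[
x=x_0\,R_{\mathfrak a}\,x_1\,R_{\mathfrak a}\cdots R_{\mathfrak a}\,x_n=y .
\]
We show by induction on $i$ that $x_i\in U$: the case $x_0\in U$ is the hypothesis, and if $x_i\in U$ then $x_{i+1}\in\mathfrak a(x_i)\subseteq U$. Hence $y\in U$. The only point needing care is this explicit chain description of $R^*$. It is a routine fact, but it is the one step where the transitive closure genuinely enters, so I would state it explicitly rather than leave it implicit.

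For the consequence, let $\{U_i\}$ be any family of $\preca$-upward closed sets, and suppose $x\in\bigcap_i U_i$ and $x\preca y$. Then $y\in U_i$ for every $i$, so $y\in\bigcap_i U_i$. Thus $\taua$ is closed under arbitrary intersections, including the empty intersection, which is $X$. It is likewise closed under arbitrary unions, so it is an Alexandrov topology. This also recovers, independently of \cite{AcikgozAura2026}, the fact that $\taua$ is a topology. No step is a real obstacle; the whole proof rests on rewriting $\mathfrak a$-openness as one-step forward closure.
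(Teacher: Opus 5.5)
Your proof is correct and follows essentially the same route as the paper: you use the one-step characterization ($U\in\taua$ iff $\mathfrak a(x)\subseteq U$ for all $x\in U$) for double inclusion, induct along an explicit $R_{\mathfrak a}$-chain to handle $\preca$, and check directly that intersections of upsets are upsets. Your extra remark that upsets are also closed under arbitrary unions, with the empty intersection being $X$, is a small bonus: it makes the fact that $\taua$ is a topology self-contained, whereas the paper takes that fact from the original aura paper.
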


\begin{proof}
We prove the equality $\taua = \Up(X,\preca)$ by showing both inclusions. First, let $U \in \taua$. By the definition of $\mathfrak a$-openness, $\inta(U) = U$. Thus, for every $x \in U$, $\mathfrak a(x) \subseteq U$. we recall that
\[
xR_a y \iff y \in \mathfrak a(x).
\]
Hence, if $x \in U$ and $xR_a y$, then $y \in \mathfrak a(x)\subseteq U$. Now suppose that $x\in U$ and $x \preca y$. Since $\preca$ is the reflexive-transitive closure of $R_a$, there exist points $x = x_0, x_1, \ldots, x_n = y$ such that $x_i R_a x_{i+1} \quad (i = 0, \ldots, n-1)$.
We begin with $x_0 = x \in U$, the $\mathfrak a$-openness of $U$ gives $x_1 \in \mathfrak a(x_0) \subseteq U$. Since $x_1 \in U$, we again have $a(x_1) \subseteq U$, and therefore $x_2 \in U$. Repeating this argument yields $x_0, x_1, \ldots, x_n \in U$. Thus, $x \in U,\, x \preca y \, \Longrightarrow  y \in U$, so $U$ is a $\preca$-upset.

Consequently,
\[
\taua \subseteq \Up(X,\preca).
\]
Conversely, suppose that $U \in \Up(X,\preca)$. We show that $U$ is $\mathfrak a$-open. Let $x \in U$. If $y \in \mathfrak a(x)$, then by the definition of $R_a$, $x R_a y$.
Since $R_a \subseteq \preca$, we have
\[
x \preca y.
\]
Because $U$ is a $\preca$-upset and $x \in U$, we obtain $y \in U$. Since this holds for every $y \in \mathfrak a(x)$, $\mathfrak a(x) \subseteq U$. Hence, $U \subseteq \inta(U)$. By definition, $\inta(U) \subseteq U$. Thus, $\inta(U) = U$, and therefore $U \in \taua$. Hence
\[
\Up(X,\preca) \subseteq \taua.
\]
Finally, we show that arbitrary intersections of $\preca$-upsets are $\preca$-upsets. Let $(U_i)_{i\in I}$ be a family of $\preca$-upsets. Suppose that
\[
x \in \bigcap_{i\in I}U_i \quad\text{and}\quad
x \preca y.
\]
Then $x \in U_i$ for every $i \in I$. Since each $U_i$ is an upset,
$ y \in U_i$ for every $i \in I$. Therefore
\[
y \in \bigcap_{i\in I}U_i.
\]
Thus $\bigcap_{i\in I}U_i$ is a $\preca$-upset.

Hence $\taua$ is closed under arbitrary intersections, and consequently $\taua$ is an Alexandrov topology.
\end{proof}

Theorem \ref{thm:alex} places aura topological spaces within the classical correspondence between preordered sets and Alexandrov topological spaces. In particular, a preorder determines an Alexandrov topology through its family of upward-closed sets, while the specialization preorder recovers the corresponding order-theoretic structure; see, for example, \cite{Alexandroff1937,Stong1966,Barmak2011}. The contribution here is the identification of the preorder generated by the scope function, namely $\preccurlyeq_{\mathfrak a}=R_{\mathfrak a}^{*}$, as the preorder whose upset topology is precisely the aura topology $\tau_{\mathfrak a}$

\begin{corollary} \label{cor:minnbhd}
For each $x\in X$, the smallest $\mathfrak a$-open neighborhood of $x$ is
\[
 N_{\mathfrak a}(x)=\upar\!\{x\}=\Sainf(\{x\}).
\]
For every $A\subseteq X$, the smallest $\mathfrak a$-open set containing $A$ is
\[
 \upar A=\Sainf(A)=\bigcup_{x\in A}N_{\mathfrak a}(x).
\]
\end{corollary}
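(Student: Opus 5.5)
The plan is to prove the general statement for an arbitrary $A\subseteq X$ and then read off the point case by taking $A=\{x\}$. Two things have to be established. First, $\upar A$ is the smallest $\mathfrak a$-open set containing $A$. Second, $\upar A=\Sainf(A)$; the decomposition $\upar A=\bigcup_{x\in A}\upar\{x\}$ is immediate from the definition of $\upar$.

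\textbf{Minimality of $\upar A$.} By reflexivity of $\preca$ we have $A\subseteq\upar A$. By transitivity, $\upar A$ is a $\preca$-upset: if $x\in A$, $x\preca y$ and $y\preca z$, then $x\preca z$. Hence $\upar A\in\taua$ by Theorem~\ref{thm:alex}. Conversely, let $U\in\taua$ with $A\subseteq U$. Theorem~\ref{thm:alex} says $U$ is an upset, so every $y$ with $x\preca y$ for some $x\in A$ lies in $U$. Thus $\upar A\subseteq U$, and $\upar A$ is the least $\mathfrak a$-open superset of $A$.

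\textbf{Identification $\upar A=\Sainf(A)$.} I will argue by induction on $n$ that
\[
\Sa^n(A)=\{y:\ \text{there is an }R_{\mathfrak a}\text{-chain of length exactly } n \text{ from some } x\in A \text{ to } y\}.
\]
The case $n=0$ is trivial. For the inductive step, $y\in\Sa^{n+1}(A)$ means $y\in\mathfrak a(z)$, i.e.\ $zR_{\mathfrak a}y$, for some $z\in\Sa^n(A)$, which extends a chain of length $n$ by one step. Taking the union over $n$ gives exactly the points reachable from $A$ by a finite $R_{\mathfrak a}$-chain. Since $\preca=R_{\mathfrak a}^*$ consists precisely of pairs joined by finite chains (length $0$ accounting for reflexivity), this union is $\upar A$.

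Specializing to $A=\{x\}$ yields $N_{\mathfrak a}(x)=\upar\{x\}=\Sainf(\{x\})$, and the displayed union formula follows. I expect no real obstacle. The only point requiring care is the description of the reflexive--transitive closure as finite chains, including chains of length zero, which is what matches $\Sa^0(A)=A$. Reflexivity of $R_{\mathfrak a}$ also makes the sequence $\Sa^n(A)$ increasing, but the union argument does not even need this.
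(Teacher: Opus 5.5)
Your proof is correct and follows essentially the same route as the paper. Theorem~\ref{thm:alex} identifies the aura-open sets with the $\preca$-upsets, so $\upar A$ is the least aura-open superset of $A$, and $\Sainf$ is then identified with finite $R_{\mathfrak a}$-reachability. The only difference is organization: the paper treats singletons first and passes to $A$ via $\bigcup_{x\in A}\Sainf(\{x\})=\Sainf(A)$, which it uses without comment. Your induction on $\Sa^n(A)$ for arbitrary $A$ proves the reachability description directly, so that additivity step is never needed.
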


\begin{proof}
By Theorem \ref{thm:alex}, the aura-open sets are exactly the $\preca$-upsets. Fix $x\in X$. The principal upset
\[
\upar\{x\} = \{y \in X:x \preca y\}
\]
is an aura-open set containing $x$. Let $U \in \taua$ with $x \in U$. Let $y \in \upar\{x\}$, then $x \preca y$, which implies $y \in U$. Hence, $\upar\{x\}\subseteq U$. Therefore, $\upar \{x\}$ is the smallest aura-open neighborhood of $x$, so
\[
N_{\mathfrak a}(x) = \upar \{x\}.
\]

By definition of $\preca$, a point $y$ belongs to $\upar\!\{x\}$ exactly when it is reachable from $x$ by finitely many $R_{\mathfrak a}$-steps.  This is precisely the condition that $y$ belong to $\Sainf(\{x\})$.  Thus
\[
 N_{\mathfrak a}(x)=\upar\!\{x\}=\Sainf(\{x\}).
\]

Now let $A \subseteq X$. Since an upset containing $A$ must contain the principal upset generated by every point of $A$, the smallest $\mathfrak a$-open set containing $A$ is
\[
\upar A = \bigcup_{x \in A}\upar\{x\}.
\]
Using the pointwise identity established above, we obtain $\upar A = \bigcup_{x\in A}\Sainf(\{x\}) = \Sainf(A)$. Hence
\[
\upar A = \Sainf(A) = \bigcup_{x\in A}N_{\mathfrak a}(x),
\]
which proves the result.
\end{proof}

The backward operator has an equally simple form.

\begin{proposition} \label{prop:backward}
For every $A \subseteq X$,
\[
\clainf(A) = \downar A.
\]
Moreover, $\clainf$ is exactly the ordinary topological closure operator of $(X,\taua)$.
\end{proposition}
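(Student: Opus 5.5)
We prove $\clainf(A)=\downar A$ by induction on the number of scope steps, and then compare $\downar A$ with the closure in $(X,\taua)$ using Theorem \ref{thm:alex}.

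\textbf{Step 1: the finite iterates.} Unwinding the definition, $x\in\cla(A)$ iff $\mathfrak a(x)\cap A\neq\varnothing$, i.e.\ iff $xR_{\mathfrak a}y$ for some $y\in A$. So $\cla$ is exactly the one-step backward image along $R_{\mathfrak a}$. By induction on $n$ we then get
\[
\cla^n(A)=\{x:\ \text{there is an } R_{\mathfrak a}\text{-chain } x=x_0R_{\mathfrak a}x_1\cdots R_{\mathfrak a}x_n\in A\}.
\]
Since $R_{\mathfrak a}$ is reflexive, chains of length $n$ can be padded to any longer length, so this set is monotone in $n$. The inductive step is $x\in\cla(\cla^{n}(A))$ iff $\mathfrak a(x)$ meets $\cla^n(A)$, which holds iff $x R_{\mathfrak a} x_1$ for some $x_1$ with a chain of length $n$ from $x_1$ into $A$.

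\textbf{Step 2: the union.} Taking the union over $n\ge0$ gives exactly the points admitting a finite $R_{\mathfrak a}$-chain into $A$. Because $\preca=R_{\mathfrak a}^*$, this is $\{x:\exists y\in A,\ x\preca y\}=\downar A$.

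\textbf{Step 3: identification with the $\taua$-closure.} By Theorem \ref{thm:alex}, the $\taua$-closed sets are the complements of $\preca$-upsets, which are exactly the $\preca$-downsets. Now $\downar A$ is a downset by transitivity and contains $A$ by reflexivity. Any downset $D\supseteq A$ contains $\downar A$, since $x\preca y\in A\subseteq D$ forces $x\in D$. Hence $\downar A$ is the smallest $\taua$-closed superset of $A$, i.e.\ the topological closure of $A$ in $(X,\taua)$. Combined with Step 2, $\clainf$ coincides with this closure operator. As a byproduct, $\clainf$ is idempotent, even though $\cla$ need not be.

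The only delicate point is the bookkeeping in the induction of Step 1: we must check that $\cla^{n+1}=\cla\circ\cla^n$ prepends a step at the start of the chain, not the end, so that the direction of $R_{\mathfrak a}$ stays consistent with the definition of $\downar$. Everything else is a direct consequence of Theorem \ref{thm:alex}.
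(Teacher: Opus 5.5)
Your proposal is correct and follows essentially the same route as the paper. Both arguments use induction to show that $\cla^n(A)$ consists of the points with an $R_{\mathfrak a}$-chain into $A$, take the union to obtain $\downar A$, and then use Theorem~\ref{thm:alex} to identify the $\taua$-closed sets with the $\preca$-downsets. The only cosmetic difference is that you track chains of length exactly $n$, padded via reflexivity, while the paper tracks paths of length at most $n$.
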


\begin{proof}
We first show that, for every $n\geq0$, a point $x$ belongs to $\cla^n(A)$ if and only if there is an $R_{\mathfrak a}$-path of length at most $n$ beginning at $x$ and ending at a point of $A$.  For $n=0$, this is immediate from $\cla^0(A)=A$, since membership in $A$ corresponds to a path of length zero.

Assume the statement holds for some $n\geq0$.  By definition,
\[
 x\in\cla^{n+1}(A)
\]
if and only if $\mathfrak a(x)\cap\cla^n(A)\neq\varnothing$.  Thus there is some $y\in\cla^n(A)$ with $xR_{\mathfrak a}y$.  By the induction hypothesis, $y$ is joined to a point of $A$ by a path of length at most $n$, and adjoining the initial edge $xR_{\mathfrak a}y$ gives a path from $x$ to $A$ of length at most $n+1$.  For the converse, if the path has length $0$, then $x\in A\subseteq\cla^{n+1}(A)$; otherwise, reading the path from its first edge gives the required membership. This proves the claim by induction.

Taking the union over all $n$ therefore gives
\[
 x\in\clainf(A)
 \iff
 \text{$x$ reaches some $y\in A$ by finitely many $R_{\mathfrak a}$-steps}
 \iff
 x\in\downar A.
\]
Hence
\[
 \clainf(A)=\downar A.
\]

By Theorem \ref{thm:alex}, $\taua$ is the upset Alexandrov topology of $\preca$.  Its closed sets are therefore exactly the $\preca$-downsets.  The smallest downset containing $A$ is $\downar A$, so the ordinary topological closure of $A$ in $\taua$ is
\[
 \operatorname{cl}_{\taua}(A)=\downar A=\clainf(A).
\]
Thus $\clainf$ is exactly the closure operator of $(X,\taua)$.
\end{proof}

\begin{remark}
A\c{c}{\i}kg\"oz \cite{AcikgozAura2026} proved that the
Kuratowski closure obtained by iterating the aura closure induces precisely the aura topology $\tau_{\mathfrak a}$. Proposition \ref{prop:backward} provides an order-theoretic strengthening and an explicit description of this phenomenon. Namely,
\[
\clainf = \downar A = \operatorname{cl}_{\taua(A)}
\]
for every $A \subseteq X$.
Thus the resulting closure is completely determined by finite
reachability in the scope relation, and the equality of the induced topology with $\taua$ follows directly from the
reachability preorder.
\end{remark}

\begin{theorem} \label{thm:transitive}
The following are equivalent:
\begin{enumerate}[label=(\roman*)]
\item $\mathfrak a$ is transitive: $y \in\mathfrak a(x)$ implies $\mathfrak a(y) \subseteq \mathfrak a(x)$;
\item $R_{\mathfrak a}$ is transitive;
\item $R_{\mathfrak a} = \preca$;
\item $\cla$ is idempotent;
\item $\cla = \clainf$;
\item $\Sa$ is idempotent;
\item $\Sa = \Sainf$.
\end{enumerate}
\end{theorem}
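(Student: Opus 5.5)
The plan is to arrange the seven conditions in a hub around (ii)--(iii). The chain (i)$\Leftrightarrow$(ii)$\Leftrightarrow$(iii) is purely relational. The pairs (iv)$\Leftrightarrow$(v) and (vi)$\Leftrightarrow$(vii) follow from the iteration formalism. The link to (iii) comes from reading $\cla$ and $\Sa$ as one-step backward and forward reachability, so that \cref{prop:backward} and \cref{cor:minnbhd} identify their limits with $\downar$ and $\upar$.

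For (i)$\Leftrightarrow$(ii): $\mathfrak a(x)$ is exactly the set of $R_{\mathfrak a}$-successors of $x$. Condition (i) then says that $xR_{\mathfrak a}y$ and $yR_{\mathfrak a}z$ imply $xR_{\mathfrak a}z$, which is transitivity. For (ii)$\Leftrightarrow$(iii): $R_{\mathfrak a}$ is reflexive, so it equals its reflexive--transitive closure exactly when it is transitive.

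Next I observe the one-step descriptions
\[
\cla(A)=\{x:\exists y\in A,\ xR_{\mathfrak a}y\},\qquad \Sa(A)=\{y:\exists x\in A,\ xR_{\mathfrak a}y\},
\]
while $\clainf(A)=\downar A$ and $\Sainf(A)=\upar A$. Then (iii)$\Rightarrow$(v) and (iii)$\Rightarrow$(vii) are immediate by substituting $R_{\mathfrak a}=\preca$ into these descriptions. For (v)$\Rightarrow$(iv), note that $\clainf$ is idempotent, being a Kuratowski closure by \cref{prop:backward}. For (iv)$\Rightarrow$(v), idempotence gives $\cla^n=\cla$ for all $n\ge1$, so $\clainf(A)=A\cup\cla(A)=\cla(A)$ by extensivity. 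The same argument gives (vi)$\Leftrightarrow$(vii): $\Sa$ is extensive because $x\in\mathfrak a(x)$, and $\Sainf(A)=\upar A$ is idempotent since it is the smallest upset containing $A$.

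To close the cycle I prove (iv)$\Rightarrow$(ii) and (vi)$\Rightarrow$(ii) using singletons. Suppose $xR_{\mathfrak a}y$ and $yR_{\mathfrak a}z$. Then $y\in\cla(\{z\})$, so $x\in\cla(\cla(\{z\}))=\cla(\{z\})$, i.e.\ $z\in\mathfrak a(x)$. Dually, $z\in\Sa(\Sa(\{x\}))=\Sa(\{x\})=\mathfrak a(x)$. No step here is really an obstacle. The only care needed is to verify the one-step descriptions directly from the definitions, which is where the directionality noted after the definition of $\Sa$ matters, and to get the exact form of the iterated operators right in (iv)$\Leftrightarrow$(v).
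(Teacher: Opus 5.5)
Your proof is correct, and much of it matches the paper. Your singleton witnesses for (iv)$\Rightarrow$(ii) and (vi)$\Rightarrow$(ii) are the paper's contrapositive arguments, which exhibit $x\in\cla^2(\{z\})\setminus\cla(\{z\})$ and $z\in\Sa^2(\{x\})\setminus\Sa(\{x\})$. Your iteration arguments for (iv)$\Rightarrow$(v) and (vi)$\Rightarrow$(vii), using extensivity, are also the same.

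The difference is in the forward directions. The paper proves (ii)$\Rightarrow$(iv) and (ii)$\Rightarrow$(vi) by a direct element chase: a point of $\cla^2(A)$ yields $xR_{\mathfrak a}y$ and $yR_{\mathfrak a}z$ with $z\in A$, and transitivity collapses these to one step. It gets (v)$\Rightarrow$(iv) from the trivial inclusion $\cla^2(A)\subseteq\clainf(A)=\cla(A)$. So its proof never uses \cref{prop:backward} or \cref{cor:minnbhd}.

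You instead go through (iii)$\Rightarrow$(v)$\Rightarrow$(iv) and (iii)$\Rightarrow$(vii)$\Rightarrow$(vi). This uses the identifications $\clainf=\downar$ and $\Sainf=\upar$, together with the idempotence of these operators as a topological closure and an upset hull. That is legitimate: those results precede the theorem and do not depend on it. Your route makes (v) and (vii) transparent, since it shows that one-step reachability already equals full reachability. The cost is that it rests on the path-length induction behind \cref{prop:backward} and the reachability description in \cref{cor:minnbhd}, whereas the paper's chase needs only the definitions of $\cla$, $\Sa$ and $R_{\mathfrak a}$.
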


\begin{proof}
(i) $\iff$ (ii) is immediate from $xR_{\mathfrak a}y \iff y \in \mathfrak a(x)$. 

(ii) $\iff$ (iii) holds since $R_{\mathfrak a}$ is reflexive. 

(ii)$ \iff$ (iv) assume that $R_{\mathfrak a}$ is transitive. If $x \in \cla^2(A)$, then there exists $y \in \cla(A)$ such that $x R_{\mathfrak a}y$. Since $y \in \cla(A)$, there exists $z \in A$ with $y R_{\mathfrak a}z$. By transitivity, $x R_{\mathfrak a}z$. and therefore $x \in \cla(A)$. Hence $\cla^2(A) \subseteq \cla(A)$. The reverse inclusion follows from the extensivity of $\cla$, and therefore
\[
\cla^2(A) = \cla(A).
\]
Thus $\cla$ is idempotent. Conversely, suppose that $R_{\mathfrak a}$ is not transitive. Then there exist $x,y,z \in X$ such that
\[
x R_{\mathfrak a}y,\quad y R_{\mathfrak a}z,\quad x \not R_{\mathfrak a}z.
\]
Equivalently,
\[
y \in \mathfrak a(x),\quad z \in \mathfrak a(y),\quad z \notin \mathfrak a(x).
\]
Since $yR_{\mathfrak a}z$, we have $y \in \cla(\{z\})$. Together with $x R_{\mathfrak a}y$, this gives $x \in \cla^2(\{z\})$. Also, $z \notin \mathfrak a(x)$ implies $x \notin \cla(\{z\})$. Hence
\[
x \in \cla^2(\{z\}) \setminus \cla(\{z\}),
\]
so $\cla$ is not idempotent.

(iv) $\iff$ (v) since $\cla$ is extensive, the iterates form an increasing sequence
\[
A \subseteq \cla(A) \subseteq \cla^2(A) \subseteq \cdots.
\]
If $\cla$ is idempotent, then $\cla^n(A)=\cla(A)$ for every $n\geq1$. Consequently,
\[
\clainf(A) = \cla(A).
\]
Conversely, if $\cla=\clainf$, then
\[
\cla^2(A)\subseteq\clainf(A)=\cla(A),
\]
Hence,
\[
\cla^2(A)=\cla(A).
\]

(ii) $\iff$ (vi) Suppose first that $R_{\mathfrak a}$ is transitive. If $z\in\Sa^2(A)$, then there exists $y \in \Sa(A)$ such that $y R_{\mathfrak a}z$. Since $y \in \Sa(A)$, there exists $x \in A$ such that $x R_{\mathfrak a}y$. By transitivity, $x R_{\mathfrak a}z$, and hence $z \in \Sa(A)$. Therefore $\Sa^2(A) = \Sa(A)$, so $\Sa$ is idempotent. Conversely, if transitivity fails, choose $x,y,z$ such that
\[
x R_{\mathfrak a}y,\quad y R_{\mathfrak a}z,\quad x \not R_{\mathfrak a}z.
\]
Then $z \in \Sa^2(\{x\})$, but $z \notin \Sa(\{x\})$, and therefore
\[
z \in \Sa^2(\{x\}) \setminus \Sa(\{x\}).
\]
Thus $\Sa$ is not idempotent.

(vi) $\iff$ (vii) since $\Sa$ is extensive, idempotence is equivalent to stabilization after one step. Hence $\Sa=\Sainf$ if and only if $\Sa$ is idempotent. 
\end{proof}
\section{Aura-Levine equivalence} \label{sex:4}

\begin{definition} \label{def:aura-levine}
For $A,B\subseteq X$, define
\[
A \eqa B \quad \Longleftrightarrow \quad \bigl(A \subseteq U \iff B \subseteq U \bigr) \quad\text{for every }U \in \taua.
\]
The corresponding canonical hull is
\[
\Ka(A) = \bigcap\{U \in \taua:A \subseteq U\}.
\]
\end{definition}

Thus $\eqa$ is Levine's relation computed in the topology generated by the scope function.  The next result is the central representation theorem.

\begin{theorem} \label{thm:main-equivalence}
For all $A,B \subseteq X$,
\[
\begin{aligned}
 A\eqa B
 &\Longleftrightarrow \Ka(A) = \Ka(B)\\
 &\Longleftrightarrow \upar A = \upar B\\
 &\Longleftrightarrow \Sainf(A) = \Sainf(B).
\end{aligned}
\]
Moreover,
\[
\Ka(A)=\upar A=\Sainf(A).
\]
\end{theorem}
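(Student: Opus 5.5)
The plan is to establish the ``moreover'' identity $\Ka(A)=\upar A=\Sainf(A)$ first, and then read off the chain of equivalences from it.

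\emph{Step 1: the hull identity.} By Theorem \ref{thm:alex}, $\taua$ is an Alexandrov topology, so the intersection $\Ka(A)=\bigcap\{U\in\taua:A\subseteq U\}$ is itself $\mathfrak a$-open. It contains $A$, and it is contained in every $\mathfrak a$-open superset of $A$. Hence $\Ka(A)$ is the smallest $\mathfrak a$-open set containing $A$. Corollary \ref{cor:minnbhd} identifies that smallest set as $\upar A=\Sainf(A)$, which gives $\Ka(A)=\upar A=\Sainf(A)$.

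\emph{Step 2: $A\eqa B \iff \Ka(A)=\Ka(B)$.} For $U\in\taua$ we have $A\subseteq U \iff \Ka(A)\subseteq U$. The forward direction uses the minimality of $\Ka(A)$ from Step 1; the reverse direction uses $A\subseteq\Ka(A)$.
\begin{itemize}
\item Suppose $\Ka(A)=\Ka(B)$. Then for every $U\in\taua$,
\[
A\subseteq U\iff\Ka(A)\subseteq U\iff\Ka(B)\subseteq U\iff B\subseteq U,
\]
so $A\eqa B$.
\item Suppose $A\eqa B$. Test the definition with $U=\Ka(A)$, which lies in $\taua$ by Step 1. Since $A\subseteq\Ka(A)$, we get $B\subseteq\Ka(A)$, and minimality gives $\Ka(B)\subseteq\Ka(A)$. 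The symmetric argument gives the reverse inclusion, so $\Ka(A)=\Ka(B)$.
\end{itemize}
Alternatively, one could cite Levine's general result that $A\equiv_\sigma B$ iff $K_\sigma(A)=K_\sigma(B)$ for $\sigma=\taua$. I would nonetheless include the short argument above to keep the proof self-contained.

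\emph{Step 3: the remaining equivalences.} These follow immediately by substituting the identity of Step 1 into Step 2.

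\emph{Main obstacle.} The only genuinely substantive point is that $\Ka(A)$ is open. In a general topology this fails, and it is exactly where Theorem \ref{thm:alex} is needed. Everything else is routine bookkeeping. I would state the openness explicitly, noting that it needs closure of $\taua$ under arbitrary intersections and not just finite ones. I would also double-check that Corollary \ref{cor:minnbhd}'s identification $\upar A=\Sainf(A)$ covers arbitrary (possibly infinite) $A$. It does: $\Sainf$ is defined by unions over points, and Corollary \ref{cor:minnbhd} already proves the pointwise identity $\upar\{x\}=\Sainf(\{x\})$.
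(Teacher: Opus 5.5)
Your proposal is correct and follows essentially the same route as the paper: openness of $\Ka(A)$ from the Alexandrov property in Theorem~\ref{thm:alex}, identification of this least aura-open superset with $\upar A=\Sainf(A)$ via Corollary~\ref{cor:minnbhd}, and then substitution into the hull criterion for $\eqa$. The only difference is that you verify $A\eqa B\iff\Ka(A)=\Ka(B)$ directly instead of citing Levine's general result. That makes the argument self-contained; your choice $U=\Ka(A)$ there uses openness, which holds here, although the criterion is true in any topology because equivalent sets have the same family of open supersets.
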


\begin{proof}
Levine's general hull characterization gives
\[
 A \eqa B \iff \Ka(A) = \Ka(B).
\]
It remains to identify the hull explicitly in the aura topology. By definition,
\[
\Ka(A) = \bigcap \{U \in \taua: A \subseteq U\}.
\]
By Theorem \ref{thm:alex}, $\taua$ is Alexandrov, so arbitrary intersections of aura-open sets are aura-open.  Hence, $\Ka(A)$ is aura-open. By definition, we have $A \subseteq \Ka(A)$. Moreover, if $V \in \taua$ and $A \subseteq V$, then $\Ka(A) \subseteq V$. Thus $\Ka(A)$ is exactly the least aura-open superset of $A$.

By Corollary \ref{cor:minnbhd}, that least aura-open superset is
\[
\upar A = \Sainf(A).
\]
Therefore,
\[
\Ka(A) = \upar A = \Sainf(A).
\]
Hence,
\[
\Ka(A) = \Ka(B) \iff \upar A = \upar B \iff \Sainf(A) = \Sainf(B).
\]
\end{proof}

\begin{corollary} \label{cor:largest}
For every $A \subseteq X$, $\Ka(A)$ is the largest member of the aura-Levine equivalence class $[A]_{\mathfrak a}$. In particular,
\[
B \eqa A \quad \Longrightarrow \quad B \subseteq \Sainf(A).
\]
\end{corollary}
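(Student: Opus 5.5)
The plan is to prove two things about $\Ka(A)$: that it lies in the class $[A]_{\mathfrak a}$, and that every member of that class is contained in it. The inclusion $B\subseteq\Sainf(A)$ will then be read off from the identity $\Ka(A)=\Sainf(A)$ in Theorem \ref{thm:main-equivalence}.

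\emph{Step 1: $\Ka(A)\eqa A$.} I will show $\Ka(A)$ and $A$ have the same aura-open supersets. By Theorem \ref{thm:main-equivalence} (or directly by Theorem \ref{thm:alex}), $\Ka(A)$ is the least aura-open superset of $A$. Take $U\in\taua$. If $A\subseteq U$, then $\Ka(A)\subseteq U$ by minimality. Conversely, if $\Ka(A)\subseteq U$, then $A\subseteq\Ka(A)\subseteq U$. A quicker route is available: $\Ka(\Ka(A))=\Ka(A)$ because $\Ka(A)$ is itself aura-open, and then Levine's characterization $A\eqa B\iff\Ka(A)=\Ka(B)$ from Theorem \ref{thm:main-equivalence} gives the equivalence.

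\emph{Step 2: maximality.} Let $B\eqa A$. By Theorem \ref{thm:main-equivalence}, $\Ka(B)=\Ka(A)$. Since $\Ka$ is extensive, $B\subseteq\Ka(B)=\Ka(A)$. Hence $\Ka(A)$ contains every member of $[A]_{\mathfrak a}$ and belongs to the class by Step 1, so it is the largest member, in fact the maximum under inclusion.

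\emph{Step 3: the displayed implication.} Substituting $\Ka(A)=\Sainf(A)$ from Theorem \ref{thm:main-equivalence} into Step 2 gives $B\subseteq\Sainf(A)$.

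I do not expect any real obstacle. The only point needing care is Step 1, which relies on $\Ka(A)$ being aura-open. That comes from the Alexandrov property in Theorem \ref{thm:alex}, which makes arbitrary intersections of aura-open sets aura-open. Without it, $\Ka(A)$ would still be the largest equivalent set by Levine's general result, but the explicit form $\Sainf(A)$ would not be available.
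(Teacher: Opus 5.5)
Your proposal is correct and follows essentially the paper's own argument: you show $\Ka(A)\eqa A$ via $\Ka(\Ka(A))=\Ka(A)$ and the hull characterization in Theorem \ref{thm:main-equivalence}, then use $B\subseteq\Ka(B)=\Ka(A)$ and substitute $\Ka(A)=\Sainf(A)$. Your alternative direct check in Step 1, comparing aura-open supersets, is also valid, and it needs only the definition of $\Ka$ as an intersection of aura-open supersets, not the openness of $\Ka(A)$.
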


\begin{proof}
By Theorem \ref{thm:main-equivalence}, $A$ and $\Ka(A)$ have the same hull because
\[
\Ka(\Ka(A)) = \Ka(A).
\]
Hence, $A \eqa \Ka(A)$, so $\Ka(A) \in [A]_{\mathfrak a}$. Now let $B \eqa A$. By Theorem \ref{thm:main-equivalence},
\[
\Ka(B) = \Ka(A).
\]
Since $B \subseteq \Ka(B)$,
\[
B \subseteq \Ka(B) = \Ka(A).
\]
Thus every representative equivalent to $A$ is contained in $\Ka(A)$, proving that $\Ka(A)$ is the largest representative of the class. Finally, Theorem \ref{thm:main-equivalence} gives $\Ka(A) = \Sainf(A)$, and hence
\[
B \subseteq \Sainf(A).
\]
\end{proof}

The representation obtained in Theorem \ref{thm:main-equivalence} allows the algebraic properties of the aura-Levine hull to be derived directly from the Alexandrov structure of the aura topology. We now record these consequences, which will also provide the basis for the quotient structure studied in the next section.

\begin{proposition} \label{prop:hull-algebra}
The map $\Ka:\Pset(X) \to \Pset(X)$ satisfies
\begin{enumerate}[label=(\alph*)]
\item $\Ka(\varnothing) = \varnothing$ and $A \subseteq \Ka(A)$;
\item $A \subseteq B \Rightarrow \Ka(A) \subseteq \Ka(B)$;
\item $\Ka(\Ka(A)) = \Ka(A)$;
\item $\Ka \bigl(\bigcup_{i\in I}A_i\bigr) = \bigcup_{i\in I}\Ka(A_i)$ for every family $(A_i)_{i\in I}$;
\item $\operatorname{Fix}(\Ka) = \taua$.
\end{enumerate}
Thus $\Ka$ is a completely additive Kuratowski closure operator whose closed sets, in its own induced topology, are exactly the aura-open sets.
\end{proposition}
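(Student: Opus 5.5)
The plan is to derive every item from the representation $\Ka(A)=\upar A$ in Theorem \ref{thm:main-equivalence}, together with the fact that $\Ka(A)$ is the least aura-open superset of $A$ (established in its proof via Theorem \ref{thm:alex}).

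We work with the explicit formula
\[
\upar A=\{y:\exists x\in A,\ x\preca y\}.
\]
For (a), $\upar\varnothing=\varnothing$ because there is no witness $x$. Extensivity $A\subseteq\upar A$ follows from reflexivity of $\preca$. Monotonicity (b) is immediate, since a larger $A$ supplies more witnesses.

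For (c), $\Ka(A)$ is aura-open. Since it is the least aura-open superset of itself, $\Ka(\Ka(A))=\Ka(A)$. Alternatively, one can use transitivity of $\preca$ directly.

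For (d), the existential quantifier over $x\in A$ commutes with unions. Thus $y\in\upar\bigcup_i A_i$ iff there exist $i$ and $x\in A_i$ with $x\preca y$, iff $y\in\bigcup_i\upar A_i$. I will write this as a two-line chain of equivalences. The case $I=\varnothing$ is covered, since it reduces to $\Ka(\varnothing)=\varnothing$ from (a).

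For (e), suppose $U\in\taua$. Then $U$ is itself the least aura-open superset of $U$, so $\Ka(U)=U$. Conversely, if $\Ka(A)=A$, then $A$ equals an aura-open set by Theorem \ref{thm:main-equivalence}, so $A\in\taua$.

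For the concluding sentence, (a) through (d) give the Kuratowski axioms. These are $\Ka(\varnothing)=\varnothing$, extensivity, idempotence, and finite additivity, the last being a special case of (d). In the topology induced by a closure operator, the closed sets are exactly its fixed points. By (e), these fixed points are precisely the aura-open sets.

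No step is a serious obstacle. The only point needing care is the terminological one in the final sentence: the aura-open sets become the \emph{closed} sets of the $\Ka$-topology. This is consistent because $\taua$ is closed under arbitrary unions and intersections (Theorem \ref{thm:alex}), so it is also a legitimate family of closed sets.
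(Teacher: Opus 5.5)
Your proof is correct. Its skeleton matches the paper's: both rest on the fact, from Theorem~\ref{thm:alex}, that $\Ka(A)$ is itself aura-open and hence the least aura-open superset of $A$. Both also handle (e) and the concluding Kuratowski remark the same way.

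The difference lies in (a), (b) and (d). You argue pointwise from the formula $\Ka(A)=\upar A$, so each item becomes a one-line fact about the preorder: there is no witness for $\varnothing$, extensivity is reflexivity, and an existential quantifier commutes with a union. The paper stays at the level of open sets instead:
\begin{itemize}
\item $\Ka(\varnothing)=\varnothing$ because $\varnothing\in\taua$;
\item monotonicity comes from the intersection formula;
\item the nontrivial inclusion in (d) comes from observing that $\bigcup_{i}\Ka(A_i)$ is an aura-open superset of $\bigcup_{i}A_i$.
\end{itemize}
The paper's version never needs the explicit preorder, so it works verbatim for any Alexandrov topology. Yours is more transparent but depends on the full representation in Theorem~\ref{thm:main-equivalence}.

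For (c), your direct argument is the better one. The paper derives idempotence from $A\eqa\Ka(A)$, but Corollary~\ref{cor:largest} itself obtains that equivalence from $\Ka(\Ka(A))=\Ka(A)$. Your observation that an aura-open set is its own least aura-open superset removes this loop in the presentation.
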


\begin{proof}

By Theorem \ref{thm:alex}, $\taua$ is an Alexandrov topology and is therefore closed under arbitrary intersections. Consequently, $\Ka(A)$ is itself aura-open for every $A \subseteq X$.

For (a), since $\varnothing \in \taua$, we obtain $\Ka(\varnothing) = \varnothing$. By definition, $A \subseteq \Ka(A)$.

For (b), suppose that $A \subseteq B$. Every aura-open set containing $B$ also contains $A$. Therefore,
\[
\Ka(A) = \bigcap \{U\in \taua:A\subseteq U\}
\subseteq \bigcap \{U \in \taua:B \subseteq U\} = \Ka(B).
\]
Thus $\Ka$ is monotone.

For (c), Since $A$ and $\Ka(A)$ are equivalent, their hulls are equal, and hence by Theorem \ref{thm:main-equivalence}
\[
\Ka(\Ka(A)) = \Ka(A).
\]

For (d), let $(A_i)_{i\in I}$ be an arbitrary family of subsets of $X$. Since
\[
A_i \subseteq \bigcup_{j\in I}A_j
\]
for every $i \in I$, monotonicity gives
\[
\Ka(A_i) \subseteq \Ka \left(\bigcup_{j \in I}A_j \right).
\]
Thus,
\[
\bigcup_{i\in I}\Ka(A_i) \subseteq \Ka \left(\bigcup_{i \in I}A_i\right).
\]

For the reverse inclusion, each $\Ka(A_i)$ is aura-open, and therefore their union $\bigcup_{i\in I} \Ka(A_i)$ is aura-open. Moreover, extensivity gives $A_i \subseteq\Ka(A_i)$ for every $i$, so
\[
\bigcup_{i\in I}A_i \subseteq \bigcup_{i\in I}\Ka(A_i).
\]
Thus $\bigcup_{i\in I}\Ka(A_i)$ is an aura-open superset of $\bigcup_{i \in I} A_i$. By the definition of $\Ka$,
\[
\Ka \left(\bigcup_{i \in I}A_i \right) \subseteq \bigcup_{i \in I} \Ka(A_i).
\]
Therefore,
\[
\Ka \left(\bigcup_{i \in I}A_i\right) = \bigcup_{i \in I}\Ka(A_i).
\]

Finally, for (e), suppose that $U \in \taua$. Then
\[
\Ka(U) = U.
\]
Thus every aura-open set is a fixed point of $\Ka$.

Conversely, suppose that $\Ka(A) = A$. Since $\Ka(A)$ is an arbitrary intersection of aura-open sets and $\taua$ is Alexandrov, we have $\Ka(A) \in \taua$. Hence
\[
\operatorname{Fix}(\Ka) = \taua.
\]
\end{proof}

\begin{proposition} \label{prop:dual}
For every $A \subseteq X$,
\[
\Ka(A) = \upar A, \quad \clainf(A) = \downar A.
\]
Hence $\Ka$ is the closure operator for the Alexandrov topology of $\preca^{\mathrm{op}}$, while $\clainf$ is the closure operator for the Alexandrov topology of $\preca$.
\end{proposition}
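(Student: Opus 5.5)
The two displayed identities have already been established, so I will mostly assemble earlier results. The equality $\Ka(A)=\upar A$ is part of Theorem \ref{thm:main-equivalence}, and $\clainf(A)=\downar A$ is the first assertion of Proposition \ref{prop:backward}. The real work is interpreting each operator as a topological closure operator for the correct Alexandrov topology.

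For $\clainf$, I will cite Proposition \ref{prop:backward} directly. It shows that $\clainf$ is the closure operator of $(X,\taua)$. By Theorem \ref{thm:alex}, $\taua=\Up(X,\preca)$, which is the Alexandrov topology of $\preca$ when one uses the upset convention (the convention fixed in Theorem \ref{thm:alex}). Its closed sets are the $\preca$-downsets, and the least downset containing $A$ is $\downar A$.

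For $\Ka$, I first observe that the opposite preorder $\preca^{\mathrm{op}}$ has upsets exactly equal to the $\preca$-downsets. So its Alexandrov topology $\Up(X,\preca^{\mathrm{op}})$ consists of the complements of members of $\taua$, and its closed sets are exactly the $\preca$-upsets, that is, the members of $\taua$. I then show directly that the least $\preca$-upset containing $A$ is $\upar A$:
\begin{itemize}
\item $\upar A$ is an upset, by transitivity of $\preca$;
\item $\upar A$ contains $A$, by reflexivity;
\item any upset $U\supseteq A$ contains $\upar A$, by the definition of an upset.
\end{itemize}
Hence the closure of $A$ in $\Up(X,\preca^{\mathrm{op}})$ is $\upar A=\Ka(A)$. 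This agrees with Proposition \ref{prop:hull-algebra}(e), which says that the closed sets of the topology induced by the Kuratowski operator $\Ka$ are exactly the aura-open sets. A Kuratowski closure operator is determined by its closed sets, so this gives a second confirmation.

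The only step I expect to need care is the bookkeeping of conventions: which orientation counts as the ``Alexandrov topology of'' a preorder. I will state explicitly that the Alexandrov topology of a preorder means its upset topology, and then check that $\preca^{\mathrm{op}}$-upsets are $\preca$-downsets, so that the closed sets are $\preca$-upsets. Everything else is immediate from the cited results.
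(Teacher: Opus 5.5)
Your proposal is correct and takes the same approach as the paper, which gives no separate proof and treats this proposition as an immediate consequence of Theorem \ref{thm:main-equivalence} (giving $\Ka(A)=\upar A$) and Proposition \ref{prop:backward} (giving $\clainf(A)=\downar A$ as the closure in $\taua=\Up(X,\preca)$). Your explicit check that the closed sets of $\Up(X,\preca^{\mathrm{op}})$ are exactly the $\preca$-upsets, with $\upar A$ the least one containing $A$, fills in the orientation bookkeeping that the paper leaves implicit.
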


Ordinary intersection is not compatible with $\eqa$ in general as it can be seen in the next example.

\begin{example} \label{ex:intersection}
Let $X=\{p,q,r\}$ with discrete topology and
\[
\mathfrak a(p) = \{p,q\},\qquad \mathfrak a(q) = \{q,r\},\qquad \mathfrak a(r) = \{r\}.
\]
Then $p \preca q \preca r$.  Put $A = \{p\}$, $B = \{p,q\}$ and $C = \{q\}$.  Since
\[
\Ka(A) = X =\Ka(B),
\]
we have $A \eqa B$.  But $A \cap C = \varnothing$ while $B \cap C = \{q\}$, and
\[
\Ka(\varnothing) = \varnothing \neq \{q,r\} = \Ka(\{q\}).
\]
Thus $A \cap C \not \eqa B \cap C$.
\end{example}
\section{The quotient frame and Heyting structure} \label{sex:5}
The preceding results show that each aura-Levine equivalence class is completely determined by its canonical hull. Since the fixed points of $K_{\mathfrak a}$ are exactly the aura-open sets, it is natural to identify the quotient by aura-Levine equivalence with the lattice of aura-open sets.
This yields the following structural representation.

Let
\[
\LQ = \Pset(X)/{\eqa}
\]
The quotient is ordered by
\[
[A]_{\mathfrak a}\le [B]_{\mathfrak a} \quad \Longleftrightarrow \quad \Ka(A) \subseteq \Ka(B).
\]
This is well defined by Theorem \ref{thm:main-equivalence}. We use $[A]$ instead of $[A]_{\mathfrak a}$.

\begin{theorem} \label{thm:frameiso}
The map
\[
\Phi_{\mathfrak a}: \LQ \longrightarrow \taua, \qquad
\Phi_{\mathfrak a}([A]) = \Ka(A),
\]
is an order isomorphism. Consequently, $\LQ$ is a complete frame.
\end{theorem}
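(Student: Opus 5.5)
The plan is to check four properties of $\Phi_{\mathfrak a}$ in turn (well-definedness, landing in $\taua$, order-embedding, surjectivity) and then transport the lattice structure of $\taua$. For \emph{well-definedness}, Theorem \ref{thm:main-equivalence} gives $A\eqa B\iff \Ka(A)=\Ka(B)$, so $\Ka(A)$ depends only on $[A]$. The same equivalence, read from right to left, gives \emph{injectivity} at once. For the codomain, Theorem \ref{thm:alex} says $\taua$ is Alexandrov, so $\Ka(A)=\bigcap\{U\in\taua:A\subseteq U\}$ is itself aura-open; equivalently, $\Ka(A)=\upar A$ is a $\preca$-upset. Hence $\Phi_{\mathfrak a}$ really takes values in $\taua$.

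For \emph{surjectivity}, I will use Proposition \ref{prop:hull-algebra}(e), namely $\operatorname{Fix}(\Ka)=\taua$. Given $U\in\taua$, we have $\Ka(U)=U$, so $\Phi_{\mathfrak a}([U])=U$. For the \emph{order-isomorphism} property, the quotient order was defined by $[A]\le[B]\iff\Ka(A)\subseteq\Ka(B)$, and this is exactly $\Phi_{\mathfrak a}([A])\subseteq\Phi_{\mathfrak a}([B])$. So $\Phi_{\mathfrak a}$ both preserves and reflects the order. A bijection with this property is an order isomorphism, and its inverse is $U\mapsto[U]$. One side remark: the relation $\le$ on $\LQ$ is a partial order, not just a preorder, because antisymmetry is once again the injectivity statement from Theorem \ref{thm:main-equivalence}.

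For the \emph{frame} conclusion, note that $\taua$, ordered by inclusion, is the lattice of open sets of a topology. It is therefore a complete lattice: joins are unions, and the meet of a family is the interior of its intersection, which here is simply the intersection, since $\taua$ is Alexandrov. Finite meets distribute over arbitrary joins because $U\cap\bigcup_i V_i=\bigcup_i(U\cap V_i)$. These lattice-theoretic properties are invariant under order isomorphism, so they transfer to $\LQ$. Concretely, $\bigvee_i[A_i]=[\bigcup_i A_i]$ by Proposition \ref{prop:hull-algebra}(d), and $[A]\wedge[B]=[\Ka(A)\cap\Ka(B)]$.

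I do not expect a genuine obstacle, since every ingredient is already available. The one point that needs care is the description of meets in $\LQ$. Example \ref{ex:intersection} shows that $[A]\wedge[B]$ is not in general $[A\cap B]$, so the meet has to be computed through $\Phi_{\mathfrak a}$, as intersection of the hulls, rather than on representatives. Stating this explicitly keeps the transferred frame structure from being misread as ordinary intersection of representatives.
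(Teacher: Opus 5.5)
Your proposal is correct and follows essentially the same route as the paper: well-definedness and injectivity from Theorem~\ref{thm:main-equivalence}, surjectivity from $\operatorname{Fix}(\Ka)=\taua$, order preservation and reflection directly from the definition of the quotient order, and the frame property transported from $\taua$ through the isomorphism. Your explicit checks that $\Phi_{\mathfrak a}$ lands in $\taua$ and that $\le$ is antisymmetric, and your caution about meets, are small points the paper leaves implicit or treats separately (the join and meet formulas appear afterwards as Corollary~\ref{cor:operations}).
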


\begin{proof}
First, $\Phi_{\mathfrak a}$ is well defined.  If $[A] = [B]$, then $A \eqa B$, and by Theorem \ref{thm:main-equivalence},
\[
\Ka(A) = \Ka(B).
\]
Now if $\Phi_{\mathfrak a}([A]) = \Phi_{\mathfrak a}([B])$, then $\Ka(A)=\Ka(B)$, hence $A \eqa B$ and therefore $[A]=[B]$, proving the injectivity of $\Phi_{\mathfrak a}$.

To prove surjectivity, let $U \in\taua$.  By Proposition \ref{prop:hull-algebra}, aura-open sets are the fixed points of $\Ka$, so $\Ka(U) = U$. Hence, 
\[
\Phi_{\mathfrak a}([U]) = U.
\]

By the definition of the quotient order,
\[
[A] \leq[B] \iff \Ka(A) \subseteq\Ka(B) \iff \Phi_{\mathfrak a}([A])\subseteq\Phi_{\mathfrak a}([B]).
\]
Thus $\Phi_{\mathfrak a}$ both preserves and reflects order, and is therefore an order isomorphism.

Finally, by Theorem \ref{thm:alex}, $\taua$ is the family of upsets of the preorder $\preca$. Arbitrary unions and arbitrary intersections of upsets are again upsets, and set-theoretic intersection distributes over arbitrary union.  Hence $\taua$ is a complete frame. Consequently, $\LQ$ is a complete frame as well.
\end{proof}

\begin{corollary} \label{cor:operations}
For any family $([A_i])_{i \in I}$ in $\LQ$,
\[
\bigvee_{i \in I}[A_i] = \left[\bigcup_{i \in I}A_i\right],
\]
and
\[
\bigwedge_{i \in I}[A_i] = \left[\bigcap_{i\in I}\Ka(A_i)\right].
\]
\end{corollary}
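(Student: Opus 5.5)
The plan is to transport joins and meets through the order isomorphism $\Phi_{\mathfrak a}$ of Theorem \ref{thm:frameiso}. An order isomorphism preserves all existing suprema and infima. So $\bigvee_i [A_i]$ is the class whose image under $\Phi_{\mathfrak a}$ is the join of the $\Ka(A_i)$ in $\taua$, and similarly for meets. The argument therefore reduces to two steps: compute joins and meets in $\taua$, then pull them back to classes using surjectivity, namely $\Phi_{\mathfrak a}([U])=U$ for $U\in\taua$ via Proposition \ref{prop:hull-algebra}(e).

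\textbf{Joins.} By Theorem \ref{thm:alex}, $\taua$ is closed under arbitrary unions, so the join in $\taua$ of the family $\Ka(A_i)$ is $\bigcup_i \Ka(A_i)$. By complete additivity, Proposition \ref{prop:hull-algebra}(d), this union equals $\Ka\bigl(\bigcup_i A_i\bigr)=\Phi_{\mathfrak a}\bigl([\bigcup_i A_i]\bigr)$. Hence $\bigvee_i[A_i]=[\bigcup_i A_i]$. Independence of representatives is automatic, since the right-hand side was computed from the hulls $\Ka(A_i)$, which are class invariants by Theorem \ref{thm:main-equivalence}.

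\textbf{Meets.} Because $\taua$ is Alexandrov (Theorem \ref{thm:alex}), the set $W=\bigcap_i \Ka(A_i)$ is aura-open. It is therefore the greatest aura-open set contained in every $\Ka(A_i)$, i.e.\ the meet in $\taua$. Since $W\in\taua$, we have $\Phi_{\mathfrak a}([W])=\Ka(W)=W$, which gives $\bigwedge_i[A_i]=[W]$.

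\textbf{Edge case and main obstacle.} The empty family $I=\varnothing$ needs a convention. The empty union is $\varnothing$, giving the bottom class $[\varnothing]$. The empty intersection should be read as $X$, which is aura-open and gives the top class $[X]$. The only real subtlety is the meet: one cannot write $[\bigcap_i A_i]$, as Example \ref{ex:intersection} shows, and one must pass through the hulls. The key fact that makes the displayed meet formula correct is that $\taua$ is closed under arbitrary intersections, so the set-theoretic intersection of the hulls is already the lattice meet and no interior operator is needed.
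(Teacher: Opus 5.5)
Your proposal is correct and follows essentially the same route as the paper: you transport joins and meets through the order isomorphism $\Phi_{\mathfrak a}$, use complete additivity of $\Ka$ for the join, and for the meet use closure of $\taua$ under arbitrary intersections together with $\Ka(W)=W$. Your treatment of the empty family is a harmless extra that the paper does not spell out.
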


\begin{proof}
Under the order isomorphism $\Phi_{\mathfrak a}$ from Theorem \ref{thm:frameiso}, joins and meets in $\LQ$ correspond respectively to unions and intersections in $\taua$.

For the join, complete additivity of $\Ka$ gives
\[
\Phi_{\mathfrak a} \left(\left[\bigcup_{i \in I}A_i \right]\right) = \Ka\left(\bigcup_{i\in I}A_i\right) = \bigcup_{i\in I}\Ka(A_i).
\]
The right-hand side is the join of the family $\Phi_{\mathfrak a}([A_i])$ in $\taua$. Therefore
\[
\bigvee_{i\in I}[A_i] = \left[\bigcup_{i\in I}A_i\right].
\]

For the meet, each $\Ka(A_i)$ is an upset, and arbitrary intersections of upsets remain upsets. Hence,
\[
U = \bigcap_{i\in I}\Ka(A_i) \in \taua.
\]
Thus,
\[
 \Phi_{\mathfrak a}([U]) = U =\bigcap_{i\in I}\Phi_{\mathfrak a}([A_i]),
\]
which proves
\[
 \bigwedge_{i \in I}[A_i] = \left[\bigcap_{i\in I}\Ka(A_i)\right].
\]
\end{proof}

Because every frame is a complete Heyting algebra, the quotient possesses an implication. 

\begin{theorem} \label{thm:heyting}
For $U,V \in\taua$, define
\[
U \Rightarrow_{\mathfrak a}V =\{x \in X:(\upar\{x\}) \cap U \subseteq V\}.
\]
Then $U \Rightarrow_{\mathfrak a} V \in \taua$ and it is the largest $W \in \taua$ satisfying $W \cap U \subseteq V$.  Therefore the implication in $\LQ$ is
\[
 [A] \Rightarrow[B] = \bigl[\Ka(A) \Rightarrow_{\mathfrak a}\Ka(B)\bigr].
\]
\end{theorem}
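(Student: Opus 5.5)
The plan is to work entirely inside the Alexandrov description of Theorem \ref{thm:alex}, where $\taua=\Up(X,\preca)$ and $N_{\mathfrak a}(x)=\upar\{x\}$ is the least aura-open neighbourhood of $x$ (Corollary \ref{cor:minnbhd}). Write $W_0=U\Rightarrow_{\mathfrak a}V$. There are three things to prove, in this order: that $W_0$ is an upset, that $W_0\cap U\subseteq V$, and that $W_0$ is maximal among aura-open sets with this property. The statement about $\LQ$ will then be transported along the isomorphism $\Phi_{\mathfrak a}$ of Theorem \ref{thm:frameiso}.

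\emph{Step 1: $W_0$ is an upset.} Take $x\in W_0$ and $x\preca y$. By transitivity of $\preca$ we have $\upar\{y\}\subseteq\upar\{x\}$. Hence $\upar\{y\}\cap U\subseteq\upar\{x\}\cap U\subseteq V$, so $y\in W_0$. Theorem \ref{thm:alex} then gives $W_0\in\taua$.

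\emph{Step 2: $W_0\cap U\subseteq V$.} If $x\in W_0\cap U$, then $x\in\upar\{x\}\cap U$ by reflexivity, and this set lies in $V$. So $x\in V$.

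\emph{Step 3: maximality.} Let $W\in\taua$ satisfy $W\cap U\subseteq V$, and take $x\in W$. Since $W$ is an upset, $\upar\{x\}\subseteq W$. Therefore $\upar\{x\}\cap U\subseteq W\cap U\subseteq V$, so $x\in W_0$ and $W\subseteq W_0$. Steps 1--3 together show that $W_0$ is the Heyting implication in the frame $\taua$. Note that openness of $U,V$ is not even needed in these steps; only the upset property of $W$ and $W_0$ matters.

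\emph{Step 4: transfer to the quotient.} $\Phi_{\mathfrak a}$ is an order isomorphism, and meets correspond to intersections of hulls by Corollary \ref{cor:operations}. Hence $[C]\wedge[A]\le[B]$ holds iff $\Ka(C)\cap\Ka(A)\subseteq\Ka(B)$. By Step 3 this holds iff $\Ka(C)\subseteq \Ka(A)\Rightarrow_{\mathfrak a}\Ka(B)$.

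Write $W_1=\Ka(A)\Rightarrow_{\mathfrak a}\Ka(B)$. By Step 1, $W_1$ is aura-open, so $\Ka(W_1)=W_1$ by Proposition \ref{prop:hull-algebra}(e). The last condition is therefore equivalent to $[C]\le[W_1]$. This identifies $[A]\Rightarrow[B]$ as $[W_1]$, as claimed.

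The main obstacle is only bookkeeping: one must make sure that the quotient implication is computed on the canonical representatives $\Ka(A),\Ka(B)$, so that it is well defined on classes. Everything else is immediate from the upset description of $\taua$.
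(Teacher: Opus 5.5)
Your proposal is correct and follows the paper's proof essentially step for step. The three steps match: the upset property via $\upar\{y\}\subseteq\upar\{x\}$, the inclusion $W_0\cap U\subseteq V$ via reflexivity, and maximality via $\upar\{x\}\subseteq W$, followed by transport along $\Phi_{\mathfrak a}$. The only difference is in Step 4, where you spell out the transport as an explicit adjunction check, $[C]\wedge[A]\le[B]\iff[C]\le[W_1]$, using Corollary~\ref{cor:operations} and $\Ka(W_1)=W_1$; the paper compresses this into a single sentence.
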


\begin{proof}
Let
\[
H = U \Rightarrow_{\mathfrak a}V = \{x \in X:(\upar \{x\}) \cap U \subseteq V\}.
\]
We need to show first that $H$ is aura-open. Suppose $x \in H$ and $x \preca y$. Let $z \in \upar\{y\}$, then $y \preca z$. By transitivity of $\preca$, $x \preca z$. Thus, $z \in \upar\{x\}$ and hence
\[
\upar \{y\} \subseteq \upar\{x\}.
\]
Therefore,
\[
(\upar \{y\}) \cap U \subseteq (\upar \{x\}) \cap U \subseteq V,
\]
so $y \in H$.  Thus $H$ is a $\preca$-upset, and by Theorem \ref{thm:alex}, $H \in \taua$.

Now, let $x \in H \cap U$. since $x \in \upar \{x\}$, 
\[
x \in (\upar \{x\}) \cap U \subseteq V,
\]
so $H \cap U \subseteq V$.

Finally, we show that the maximality of $H$. Let $W \in \taua$ such that $W \cap U \subseteq V$ and $x \in W$.  Since $W$ is an upset, 
\[
\upar \{x\} \subseteq W.
\]
Consequently,
\[
(\upar \{x\}) \cap U \subseteq W \cap U \subseteq V.
\]
Thus $x \in H$, and hence $W \subseteq H$. Therefore $H$ is the largest aura-open set such that $H \cap U \subseteq V$, which is the Heyting implication in the frame $\taua$.  Transporting the operation through $\Phi_{\mathfrak a}$ gives the stated implication formula in $\LQ$.
\end{proof}

\begin{definition}
Let $(X,\tau,\mathfrak a)$ be an aura topological space, and let $R_{\mathfrak a}$ be its scope relation. A nonempty subset $C \subseteq X$ is called a strongly connected component if for every $x,y \in C$,
\[
x \preca y \quad\text{and}\quad y\preca x,
\]
and $C$ is maximal with respect to this property.

Equivalently, if
\[
x \sim_{\preca}y \iff x \preca y \text{ and } y\preca x,
\]
then the strongly connected components are exactly the equivalence
classes of $\sim_{\preca}$.
\end{definition}

\begin{theorem} \label{thm:boolean}
The following are equivalent:
\begin{enumerate}[label=(\roman*)]
\item $\LQ$ is a Boolean algebra;
\item $\taua$ is closed under complements;
\item $\preca$ is symmetric;
\item $(X, \taua)$ is $R_0$;
\item $\taua$ is a partition topology consisting of arbitrary unions of strongly connected components.
\end{enumerate}
\end{theorem}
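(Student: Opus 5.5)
The plan is to route everything through Theorem \ref{thm:alex} and Theorem \ref{thm:frameiso}, so that each condition becomes a statement about the upset lattice $\Up(X,\preca)$.

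\textbf{(i)$\iff$(ii).} By Theorem \ref{thm:frameiso}, $\LQ$ is order isomorphic to the frame $\taua$. The lattice of all upsets of a preorder is a sublattice of $\Pset(X)$ that is closed under unions and intersections. So $\taua$ is Boolean exactly when every $U\in\taua$ has a complement $W\in\taua$ with $U\cap W=\varnothing$ and $U\cup W=X$. Since meets and joins in $\taua$ are the set-theoretic ones, such a $W$ must be $X\setminus U$. Hence Boolean means closed under set complements, and the converse is immediate.

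\textbf{(ii)$\iff$(iii).} For (iii)$\Rightarrow$(ii): if $\preca$ is symmetric, every upset is also a downset, so its complement is again an upset. For (ii)$\Rightarrow$(iii): suppose $x\preca y$. The set $\upar\{y\}$ is aura-open by Corollary \ref{cor:minnbhd}, so by (ii) its complement is aura-open, i.e. an upset. If $y\not\preca x$, then $x$ lies in this complement, and upward closure forces $y$ into it, which is absurd since $y\in\upar\{y\}$. Hence $y\preca x$.

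\textbf{(iii)$\iff$(v).} If $\preca$ is symmetric, it is an equivalence relation whose classes are exactly the strongly connected components. An upset is then a union of classes, and every union of classes is an upset. Conversely, if $\taua$ consists exactly of unions of components, then $\upar\{x\}$ is a union of components and also the smallest open set containing $x$. It therefore equals the component of $x$, so $x\preca y$ gives $y\sim x$, which is symmetry.

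\textbf{(iii)$\iff$(iv).} This is the step needing the most care, because it depends on the definition of $R_0$. I will use the standard one: whenever $x\in\overline{\{y\}}$ we have $y\in\overline{\{x\}}$, or equivalently, every open set containing $x$ contains $\operatorname{cl}\{x\}$. By Proposition \ref{prop:backward}, the closure in $\taua$ is $\downar$. Thus $x\in\operatorname{cl}\{y\}$ iff $x\preca y$, and the $R_0$ condition reads $x\preca y\Rightarrow y\preca x$, which is precisely (iii). If the paper uses the open-set formulation, I will translate it via $N_{\mathfrak a}(x)=\upar\{x\}$: the condition becomes $\downar\{x\}\subseteq\upar\{x\}$, which is again symmetry.

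The main obstacle is this last translation, fixing the right form of $R_0$; the remaining equivalences are bookkeeping with upsets.
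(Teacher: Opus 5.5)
Your proposal is correct and follows essentially the same route as the paper. You get (i)$\iff$(ii) from the order isomorphism $\LQ\cong\taua$ of Theorem~\ref{thm:frameiso}, and you usefully supply the remark the paper leaves implicit: complements in a lattice of sets with set-theoretic meets and joins must be set complements. You get (ii)$\Rightarrow$(iii) from the complement of $\upar\{y\}$, and (iii)$\Rightarrow$(ii),(v) from unions of $\preca$-classes. The deviations are minor. You close the cycle with (v)$\Rightarrow$(iii), showing $\upar\{x\}$ equals the open component of $x$, instead of the paper's (v)$\Rightarrow$(ii). You also check the $R_0$ equivalence through $\operatorname{cl}_{\taua}=\downar$ from Proposition~\ref{prop:backward} rather than quoting the specialization-preorder characterization of $R_0$, which amounts to the same thing.
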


\begin{proof}
(i) $\iff$ (ii) by Theorem \ref{thm:frameiso}, $\LQ \cong \taua$. Hence,
\[
\LQ \text{ is Boolean} \iff \taua \text{ is Boolean} \iff
(\forall U \in \taua)\,(X \setminus U \in \taua).
\]

(ii) $\Longrightarrow$ (iii) let $x\preca y$. Suppose $y \not \preca x$. Then $y \in \upar \{y\}$ and $x\notin \upar \{y\}$. Since
$\upar\{y\}\in\taua$, $X \setminus \upar \{y\} \in \taua$.
Moreover, $x \in X \setminus\upar \{y\}$ and $y \notin X \setminus\upar\{y\}$. By Theorem \ref{thm:alex}, $X\setminus\upar\{y\}$ is a $\preca$-upset. Hence $x \preca y \text{ and } x \in X\setminus\upar\{y\}$ imply $y \in X \setminus\upar\{y\}$, a contradiction. Therefore
\[
x\preca y\Rightarrow y\preca x.
\]
so $\preca$ is symmetric. 
 
(iii) $\Longrightarrow$ (ii) Since $\preca$ is reflexive and transitive,
$ \preca$ is an equivalence relation. Let $U\in\taua$. By Theorem \ref{thm:alex}, $x \in U,\ x\preca y \Rightarrow y \in U$. If $x\in U$ and $ x \sim_{\preca} y$, then $x\preca y$, so $y\in U$. Hence
\[
U = \bigcup_{x\in U}[x]_{\preca}.
\]
Therefore
\[
X \setminus U = \bigcup_{x\notin U}[x]_{\preca},
\]
which is again a union of $\preca$-equivalence classes, hence a $\preca$-upset. Thus
\[
X \setminus U \in \taua.
\]

(iii) $\iff$ (iv) Recall that a topological space is $R_0$ if and only if its specialization preorder is symmetric. By Theorem \ref{thm:alex}, the specialization preorder of $(X,\taua)$ is $\preca$. Therefore,
\[
(X,\taua)\text{ is }R_0 \iff \bigl(x\preca y \Rightarrow y\preca x\bigr)
\]
for all $x,y\in X$.

(iii) $\Longrightarrow$ (v) if (iii) holds, then $\preca$ is an equivalence relation and
\[
\taua = \left\{\bigcup_{C \in \mathcal C}C: \mathcal C \subseteq X/{\sim_{\preca}} \right\}.
\]
Since the $\sim_{\preca}$-classes are exactly the strongly connected components,
\[
\taua = \left\{ \text{arbitrary unions of strongly connected components} \right\},
\]
which is the partition topology

(v) $\Longrightarrow$ (ii) if
\[
\taua = \left\{ \text{arbitrary unions of strongly connected components} \right\},
\]
then for every $U\in\taua$, $X \setminus U$ is again a union of strongly connected components. Hence
\[
X \setminus U \in \taua.
\]
\end{proof}
\section{Point indistinguishability, strongly connected components, and separation} \label{sex:6}

Throughout this section, we say that an aura topological space $(X,\tau,\mathfrak a)$ is aura-$T_i$ \cite{AcikgozAura2026} if the associated topological space $(X,\tau_{\mathfrak a})$ satisfies the separation axiom $T_i$.

\begin{definition}
For $x, y \in X$, write $x \approx_{\mathfrak a}y$ if $x$ and $y$ belong to exactly the same aura-open sets.  Equivalently, $\{x\} \eqa \{y\}$.
\end{definition}

\begin{theorem} \label{thm:scc}
For $x,y \in X$, the following are equivalent:
\begin{enumerate}[label=(\roman*)]
\item $x \approx_{\mathfrak a} y$;
\item $\{x\} \eqa \{y\}$;
\item $\upar \{x\} = \upar\{y\}$;
\item $x \preca y$ and $y \preca x$;
\item $x$ and $y$ lie in the same strongly connected component.
\end{enumerate}
\end{theorem}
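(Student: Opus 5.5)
We prove the equivalences as a cycle of short implications, using the fact from Theorem \ref{thm:alex} and Corollary \ref{cor:minnbhd} that the aura-open sets are exactly the $\preca$-upsets and that $\upar\{x\}$ is the smallest aura-open neighbourhood of $x$.

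\emph{(i) $\iff$ (ii).} Because $\{x\}\subseteq U$ holds exactly when $x\in U$, condition (i) is literally Definition \ref{def:aura-levine} applied to $\{x\}$ and $\{y\}$.

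\emph{(ii) $\iff$ (iii).} Theorem \ref{thm:main-equivalence} gives $\{x\}\eqa\{y\}$ if and only if $\Ka(\{x\})=\Ka(\{y\})$. It also gives $\Ka(\{x\})=\upar\{x\}$, so (ii) and (iii) are equivalent.

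\emph{(iii) $\Rightarrow$ (iv).} Reflexivity of $\preca$ gives $y\in\upar\{y\}=\upar\{x\}$, so $x\preca y$. The symmetric argument gives $y\preca x$.

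\emph{(iv) $\Rightarrow$ (iii).} Suppose $x\preca y$ and let $z\in\upar\{y\}$, so $y\preca z$. Transitivity gives $x\preca z$, hence $\upar\{y\}\subseteq\upar\{x\}$. The same argument with the roles of $x$ and $y$ exchanged gives the reverse inclusion. This is the inclusion already used in the proof of Theorem \ref{thm:heyting}.

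\emph{(iv) $\iff$ (v).} By the definition of strongly connected components, they are exactly the equivalence classes of $\sim_{\preca}$. Condition (iv) says precisely that $x\sim_{\preca}y$. The only point to check is that $\sim_{\preca}$ is an equivalence relation: reflexivity and transitivity come from $\preca$, and symmetry is built into the definition. Hence (iv) holds exactly when $x$ and $y$ lie in the same class, that is, in the same component.

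No step is a genuine obstacle. The one place needing care is the equivalence of (iv) and (v): the "maximal subset" formulation of a component should be reconciled with the class formulation, either by citing the stated equivalence in the definition or by noting directly that any set of pairwise mutually reachable points lies inside a single $\sim_{\preca}$-class.
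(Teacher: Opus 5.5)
Your proof is correct and matches the paper's argument essentially step for step: (i)$\iff$(ii) by unwinding the definition on singletons, (ii)$\iff$(iii) via Theorem~\ref{thm:main-equivalence}, (iii)$\iff$(iv) by reflexivity and transitivity of $\preca$, and (iv)$\iff$(v) by identifying strongly connected components with $\sim_{\preca}$-classes. Your extra remark reconciling the maximal-set and equivalence-class descriptions of components is a small piece of added care that the paper leaves implicit in its definition.
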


\begin{proof}
(i) $\iff$ (ii) follows by definition. 

(ii) $\iff$ (iii) by Theorem \ref{thm:main-equivalence},
\[
\{x\} \eqa \{y\} \iff \upar \{x\} = \upar \{y\}.
\]

(iii) $\Longrightarrow$ (iv) since $\preca$ is reflexive, $y \in \upar \{y\} = \upar \{x\}$, which gives $x \preca y$.  Similarly, $ x \in\upar \{x\} = \upar \{y\}$, so $y \preca x$.

(iv) $\Longrightarrow$ (iii) suppose $x \preca y$ and $y \preca x$. If $z \in \upar \{x\}$, then $x\preca z$. From $y \preca x$ and transitivity we obtain $y \preca z$, so $z \in \upar \{y\}$. Thus $\upar \{x\} \subseteq\upar \{y\}$. Similarly, $\upar \{y\} \subseteq \upar \{x\}$. Hence,
\[
\upar\{x\} = \upar\{y\}.
\]

(iv) $\iff$ (v) $x \preca y$ and $y \preca x$ $\iff$ $[x] = [y]$.
\end{proof}

\begin{corollary} \label{cor:t0}
The following are equivalent:
\begin{enumerate}[label=(\roman*)]
\item $(X,\tau,\mathfrak a)$ is aura-$T_0$;
\item $\preca$ is antisymmetric;
\item every strongly connected component is a singleton;
\item $x \mapsto \Ka(\{x\})$ is injective.
\end{enumerate}
\end{corollary}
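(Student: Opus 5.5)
The plan is to reduce everything to Theorem \ref{thm:scc}, which already identifies topological indistinguishability in $(X,\taua)$ with mutual reachability. Recall that $(X,\taua)$ is $T_0$ exactly when distinct points never belong to the same aura-open sets. In the notation of the previous section, this means $x\approx_{\mathfrak a}y$ implies $x=y$. I would establish the cycle (i)$\iff$(ii)$\iff$(iii) and (ii)$\iff$(iv), each time invoking the equivalence of items (i)--(v) in Theorem \ref{thm:scc}.

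For (i)$\iff$(ii): aura-$T_0$ says that for all $x,y$, $x\approx_{\mathfrak a}y\Rightarrow x=y$. By Theorem \ref{thm:scc}, $x\approx_{\mathfrak a}y$ holds iff $x\preca y$ and $y\preca x$. Hence the condition reads: $x\preca y$ and $y\preca x$ imply $x=y$, which is antisymmetry of $\preca$.

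For (ii)$\iff$(iii): the strongly connected components are the $\sim_{\preca}$-classes. Such a class is a singleton for every point exactly when $x\sim_{\preca}y$ forces $x=y$, and this is again antisymmetry.

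For (ii)$\iff$(iv): by Theorem \ref{thm:main-equivalence}, $\Ka(\{x\})=\upar\{x\}$. Injectivity of $x\mapsto\upar\{x\}$ means that $\upar\{x\}=\upar\{y\}$ implies $x=y$. By the equivalence (iii)$\iff$(iv) in Theorem \ref{thm:scc}, this is the same as asking that mutual reachability implies equality, i.e.\ antisymmetry.

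There is no real obstacle here; each step is a direct translation through Theorem \ref{thm:scc}. The only point that needs care is making sure the $T_0$ condition is stated as ``points with the same open sets coincide''. This is equivalent to the usual ``some open set contains exactly one of the two points'', since two points fail to be separated in that sense precisely when they lie in the same aura-open sets.
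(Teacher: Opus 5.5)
Your proposal is correct and follows essentially the same route as the paper, which also reduces everything to Theorem~\ref{thm:scc}; for (ii)$\iff$(iv) it uses $\Ka(\{x\})=\Ka(\{y\})\iff(x\preca y$ and $y\preca x)$. The only difference is that the paper dismisses (i)$\iff$(ii)$\iff$(iii) as holding ``by definition,'' whereas you make explicit that (i)$\iff$(ii) relies on Theorem~\ref{thm:scc}, which identifies $\approx_{\mathfrak a}$ with mutual reachability.
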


\begin{proof}

(i) $\iff$ (ii) $\iff$ (iii) follows by definition.

(ii) $\iff$ (iv) follows from Theorem \ref{thm:scc}, where
\[x \preca y \text{ and } y \preca x \iff \Ka(\{x\}) = \Ka(\{y\}).
\]
\end{proof}

A\c{c}{\i}kg\"oz \cite{AcikgozAura2026} established that
aura-$T_1$ and aura-$T_2$ are equivalent. The next theorem extends that equivalence.

\begin{theorem} \label{thm:t1t2}
For an aura space, the following are equivalent:
\begin{enumerate}[label=(\roman*)]
\item the space is aura-$T_1$;
\item the space is aura-$T_2$;
\item $\taua = \Pset(X)$;
\item $\preca$ is equality;
\item $\mathfrak a(x) = \{x\}$ for every $x\in X$.
\end{enumerate}
In particular, an aura-$T_1$ space necessarily has discrete ambient topology.
\end{theorem}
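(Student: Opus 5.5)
The plan is to run the cycle (v) $\Rightarrow$ (iv) $\Rightarrow$ (iii) $\Rightarrow$ (ii) $\Rightarrow$ (i) $\Rightarrow$ (iv) $\Rightarrow$ (v), with Theorem \ref{thm:alex} as the main tool, since it lets every statement be read off the preorder $\preca$.

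For (v) $\Rightarrow$ (iv): if $\mathfrak a(x)=\{x\}$ for all $x$, then $R_{\mathfrak a}$ is the identity relation. Its reflexive--transitive closure is again equality.

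For (iv) $\Rightarrow$ (iii): when $\preca$ is equality, every subset is trivially a $\preca$-upset. Theorem \ref{thm:alex} then gives $\taua=\Pset(X)$.

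For (iii) $\Rightarrow$ (ii) $\Rightarrow$ (i): a discrete space is $T_2$, and every $T_2$ space is $T_1$.

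For (i) $\Rightarrow$ (iv), which is the only step needing an idea: suppose $x\preca y$ with $x\neq y$. By $T_1$ there is an aura-open $U$ with $x\in U$ and $y\notin U$. By Theorem \ref{thm:alex}, $U$ is an upset, so $x\in U$ and $x\preca y$ force $y\in U$, a contradiction. Equivalently, $N_{\mathfrak a}(x)=\upar\{x\}$ from Corollary \ref{cor:minnbhd} must equal $\{x\}$.

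For (iv) $\Rightarrow$ (v): $R_{\mathfrak a}\subseteq\preca$, and $\preca$ is equality, so $y\in\mathfrak a(x)$ implies $y=x$. Since also $x\in\mathfrak a(x)$, we get $\mathfrak a(x)=\{x\}$.

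For the final claim, (v) shows that each singleton $\{x\}=\mathfrak a(x)$ lies in $\tau$, because $\mathfrak a$ takes values in $\tau$. Hence $\tau$ is discrete.

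The only place with real content is (i) $\Rightarrow$ (iv), namely turning the $T_1$ separation into the collapse of the preorder through the upset characterization. The rest is bookkeeping. One point to keep straight is the direction of the implications: (ii) $\Rightarrow$ (i) is the standard fact, and the reverse direction comes for free from the cycle rather than being argued directly.
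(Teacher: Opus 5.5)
Your proposal is correct and follows essentially the paper's argument: under aura-$T_1$, the upset characterization of Theorem~\ref{thm:alex} forces $\preca$ to collapse to equality (equivalently, it forces the minimal neighbourhood $N_{\mathfrak a}(x)=\upar\{x\}$ to be $\{x\}$), and the remaining implications are bookkeeping. The differences are cosmetic: you go from (i) directly to (iv) rather than to (iii) via minimal neighbourhoods, and you explicitly justify the final discreteness clause through $\{x\}=\mathfrak a(x)\in\tau$, which the paper states but does not prove.
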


\begin{proof}
(ii) $\Longrightarrow$ (i) Clear.

(i) $\Longrightarrow$ (iii) By Theorem \ref{thm:alex}, $\taua$ is Alexandrov. Hence, for each $x \in X$,
\[
N_{\mathfrak a}(x) = \bigcap\{U \in \taua:x \in U\} \in\taua.
\]
By Corollary \ref{cor:minnbhd}, $N_{\mathfrak a}(x)$ is the least aura-open neighborhood of $x$. Let $y \in N_{\mathfrak a}(x)$. If $y \neq x$, then the $T_1$ property yields some $U \in \taua$ such that $x \in U$ and $y \notin U$. But
\[
N_{\mathfrak a}(x) = \bigcap\{V\in\taua:x\in V\} \subseteq U,
\]
which contradicts $y \in N_{\mathfrak a}(x)$. Hence, $N_{\mathfrak a}(x)  = \{x\}$. Thus, $\{x\} \in \taua$ for every $x \in X$, and hence
\[
\taua = \Pset(X).
\]

(iii) $\Longrightarrow$ (ii) $\taua = \Pset(X) \Rightarrow (X,\taua)\text{ is discrete}
\Rightarrow (X,\taua)\text{ is }T_2$.

(iii) $\iff$ (iv) By Corollary \ref{cor:minnbhd},
\[
N_{\mathfrak a}(x)=\upar \{x\} = \{y \in X:x \preca y\}.
\]
Therefore, $\taua = \Pset(X)$ if and only if $N_{\mathfrak a}(x) = \{x\}$ for every $x\in X$, which is equivalent to $\upar\{x\} = \{x\}$ for every $x \in X$. Equivalently,
\[
x \preca y \Rightarrow y = x.
\]
Since $\preca$ is reflexive, $x \preca x$ for every $x \in X$. Hence,
\[
x \preca y \iff x = y.
\]

(iv) $\Longrightarrow$ (v) Let $y \in \mathfrak a(x)$. By definition of $R_{\mathfrak a}$,
\[
y \in \mathfrak a(x) \iff x R_{\mathfrak a} y.
\]
Since $R_{\mathfrak a}\subseteq\preca$, we have
\[
x R_{\mathfrak a}y \Rightarrow x \preca y \Rightarrow x = y.
\]
Thus
\[
\mathfrak a(x) = \{x\}.
\]

(v) $\Longrightarrow$ (iv) By (v),
\[
x R_{\mathfrak a}y \iff y \in\mathfrak a(x) \iff y = x.
\]
Hence
\[
R_{\mathfrak a} = \{(x, x): x \in X\}.
\]
Thus,
\[
\preca = R_{\mathfrak a}
\]
\end{proof}

\begin{corollary} \label{cor:kolmogorov}
Let $Q=X/{\approx_{\mathfrak a}}$ be the set of strongly connected components, ordered by
\[
 [x]\le_Q[y]\quad\Longleftrightarrow\quad x\preca y.
\]
Then $(Q,\le_Q)$ is a poset, its upset topology is $T_0$, and the quotient map $\pi:X\to Q$ induces a frame isomorphism
\[
 \taua\cong\operatorname{Up}(Q,\le_Q),\qquad
 U\longmapsto\pi[U].
\]
Thus the $T_0$ reflection of an aura topology is the condensation poset of its scope digraph.
\end{corollary}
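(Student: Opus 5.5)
We first check that the order on $Q$ is well defined and is a partial order. By Theorem \ref{thm:scc}, $x\approx_{\mathfrak a}x'$ holds exactly when $x\preca x'$ and $x'\preca x$. So if $[x]=[x']$ and $[y]=[y']$, transitivity of $\preca$ gives $x\preca y\iff x'\preca y'$, and $\le_Q$ does not depend on representatives. Reflexivity and transitivity of $\le_Q$ come directly from those of $\preca$. For antisymmetry, $[x]\le_Q[y]$ and $[y]\le_Q[x]$ give $x\preca y$ and $y\preca x$, hence $[x]=[y]$ by Theorem \ref{thm:scc}. The upset topology of a poset is $T_0$, because its specialization order is the poset order. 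If two points lay in exactly the same open sets, each would lie in the other's principal upset $\uparrow[x]$, and antisymmetry would force equality.

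For the frame isomorphism, we use Theorem \ref{thm:alex}: $\taua=\Up(X,\preca)$. The key observation is that every $\preca$-upset $U$ is saturated for $\approx_{\mathfrak a}$. If $x\in U$ and $x\approx_{\mathfrak a}y$, then $x\preca y$, so $y\in U$. Consequently $U=\pi^{-1}[\pi[U]]$. We then check three things.
\begin{enumerate}
\item $\pi[U]$ is an upset of $Q$. If $[x]\in\pi[U]$ with $x\in U$ (by saturation any representative works) and $[x]\le_Q[y]$, then $x\preca y$, so $y\in U$ and $[y]\in\pi[U]$.
\item Every $V\in\Up(Q,\le_Q)$ has $\pi^{-1}[V]\in\taua$. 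This holds because $x\preca y$ implies $\pi(x)\le_Q\pi(y)$, i.e. $\pi$ is monotone, and preimages of upsets under monotone maps are upsets.
\item The maps $U\mapsto\pi[U]$ and $V\mapsto\pi^{-1}[V]$ are mutually inverse. We have $\pi[\pi^{-1}[V]]=V$ since $\pi$ is surjective, and $\pi^{-1}[\pi[U]]=U$ by saturation.
\end{enumerate}
Both maps are clearly monotone for inclusion. An order isomorphism between complete lattices preserves all joins and meets, so it is a frame isomorphism. One can also see this directly: $\pi^{-1}$ commutes with arbitrary unions and intersections.

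The last sentence is interpretive. $Q$ is the set of strongly connected components of the digraph $(X,R_{\mathfrak a})$, ordered by reachability, which is exactly the condensation of that digraph with its reflexive–transitive order. By the frame isomorphism and the $T_0$ property, $(Q,\Up(Q,\le_Q))$ is the Kolmogorov quotient of $(X,\taua)$, i.e. its $T_0$ reflection. The only step needing real care is saturation. It is what makes $\pi[\,\cdot\,]$ injective on opens and makes $\pi[U]$ an upset independent of the choice of representatives; everything else is routine bookkeeping once Theorem \ref{thm:alex} and Theorem \ref{thm:scc} are invoked.
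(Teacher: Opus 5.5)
Your proposal is correct and follows essentially the same route as the paper. Theorem~\ref{thm:scc} gives well-definedness and antisymmetry of $\le_Q$; saturation of $\preca$-upsets gives $\pi[U]\in\Up(Q,\le_Q)$ and $\pi^{-1}(\pi[U])=U$; Theorem~\ref{thm:alex} gives $\pi^{-1}(V)\in\taua$; and the two mutually inverse order isomorphisms are then a frame isomorphism. The only difference is that you spell out checks the paper leaves implicit: representative-independence via transitivity, reflexivity and transitivity of $\le_Q$, and the $T_0$ property of the upset topology.
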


\begin{proof}
By Theorem \ref{thm:scc}, the equivalence classes of $\approx_{\mathfrak a}$ are exactly the strongly connected components, and the relation $\le_Q$ is therefore well defined and antisymmetric. Hence $(Q,\le_Q)$ is a poset. Every $U\in\taua$ is a $\preca$-upset and is saturated with respect to $\approx_{\mathfrak a}$, so $\pi[U]$ is an upset of $Q$ and $\pi^{-1}(\pi[U])=U$. Conversely, if $V$ is an upset of $Q$, then $\pi^{-1}(V)$ is a $\preca$-upset and hence belongs to $\taua$ by Theorem \ref{thm:alex}. Thus $U\mapsto\pi[U]$ and $V\mapsto\pi^{-1}(V)$ are inverse order isomorphisms, and therefore a frame isomorphism. Since the upset topology of a poset is $T_0$, the stated $T_0$ reflection follows.
\end{proof}
\section{Scope refinement and classification up to Levine observability} \label{sex:7}

\begin{definition}
For two scope functions $\mathfrak a_1,\mathfrak a_2$ on the same $(X,\tau)$, write
\[
\mathfrak a_2 \sqsubseteq \mathfrak a_1 \quad \Longleftrightarrow \quad \mathfrak a_2(x) \subseteq \mathfrak a_1(x) \quad(x \in X).
\]
Thus $\mathfrak a_2$ is a refinement of $\mathfrak a_1$.
\end{definition}

\begin{theorem} \label{thm:refinement}
If $\mathfrak a_2 \sqsubseteq\mathfrak a_1$, then
\begin{enumerate}[label=(\alph*)]
\item $R_{\mathfrak a_2} \subseteq R_{\mathfrak a_1}$ and $\preccurlyeq_{\mathfrak a_2} \subseteq \preccurlyeq_{\mathfrak a_1}$;
\item $\tau_{\mathfrak a_1} \subseteq\tau_{\mathfrak a_2}$;
\item $K_{\mathfrak a_2}(A)\subseteq K_{\mathfrak a_1}(A)$ for every $A \subseteq X$;
\item $\equiv_{\mathfrak a_2}\ \subseteq\ \equiv_{\mathfrak a_1}$ as equivalence relations on $\Pset(X)$.
\end{enumerate}
\end{theorem}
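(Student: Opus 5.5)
The plan is to prove the four parts in order, each following from the previous one together with the representation results already established. For (a), note that $\mathfrak a_2(x)\subseteq\mathfrak a_1(x)$ gives at once $xR_{\mathfrak a_2}y\Rightarrow y\in\mathfrak a_2(x)\subseteq\mathfrak a_1(x)\Rightarrow xR_{\mathfrak a_1}y$. The reflexive--transitive closure is monotone: an $R_{\mathfrak a_2}$-path $x=x_0,\dots,x_n=y$ is also an $R_{\mathfrak a_1}$-path. Hence $\preccurlyeq_{\mathfrak a_2}\subseteq\preccurlyeq_{\mathfrak a_1}$.

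For (b), invoke Theorem~\ref{thm:alex} for both scope functions, so that $\tau_{\mathfrak a_i}=\Up(X,\preccurlyeq_{\mathfrak a_i})$. If $U$ is a $\preccurlyeq_{\mathfrak a_1}$-upset, $x\in U$ and $x\preccurlyeq_{\mathfrak a_2}y$, then (a) gives $x\preccurlyeq_{\mathfrak a_1}y$, so $y\in U$. Thus $U$ is a $\preccurlyeq_{\mathfrak a_2}$-upset, and this is the reversal of inclusion: a larger preorder has fewer upsets. A direct alternative also works: $\mathfrak a_1(x)\subseteq U$ implies $\mathfrak a_2(x)\subseteq U$, so $\inta$-fixed points for $\mathfrak a_1$ are fixed points for $\mathfrak a_2$.

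For (c), there are two routes and I would give one cleanly. The first uses Theorem~\ref{thm:main-equivalence}: $K_{\mathfrak a_i}(A)=\uparrow_{\mathfrak a_i}A$, and by (a) any $y$ with $x\preccurlyeq_{\mathfrak a_2}y$ for some $x\in A$ also satisfies $x\preccurlyeq_{\mathfrak a_1}y$. The second uses the definition $K_{\mathfrak a_i}(A)=\bigcap\{U\in\tau_{\mathfrak a_i}:A\subseteq U\}$ and (b): the intersection for $\mathfrak a_2$ ranges over a larger family, so it is smaller.

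Part (d) is the only step needing care, because finer hulls do not automatically give coarser equivalence by monotonicity alone. I would argue through the open sets. Suppose $A\equiv_{\mathfrak a_2}B$, so $A\subseteq U\iff B\subseteq U$ for every $U\in\tau_{\mathfrak a_2}$. By (b), every $U\in\tau_{\mathfrak a_1}$ lies in $\tau_{\mathfrak a_2}$, so the same biconditional holds for all $U\in\tau_{\mathfrak a_1}$, i.e.\ $A\equiv_{\mathfrak a_1}B$.

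As a cross-check via hulls, the equality $K_{\mathfrak a_2}(A)=K_{\mathfrak a_2}(B)$ together with $K_{\mathfrak a_1}\circ K_{\mathfrak a_2}=K_{\mathfrak a_1}$ would give the same conclusion. The identity $K_{\mathfrak a_1}\circ K_{\mathfrak a_2}=K_{\mathfrak a_1}$ follows from (c) and idempotence: $A\subseteq K_{\mathfrak a_2}(A)\subseteq K_{\mathfrak a_1}(A)$, and applying $K_{\mathfrak a_1}$ with Proposition~\ref{prop:hull-algebra}(b),(c) yields it. I expect no real obstacle; the main point is to base (d) on (b) rather than on (c) alone.
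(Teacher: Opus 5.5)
Your proposal is correct and follows the paper's proof essentially step for step. Part (a) lifts $R_{\mathfrak a_2}$-paths to $R_{\mathfrak a_1}$-paths, (b) compares upsets via Theorem~\ref{thm:alex}, (c) uses $K_{\mathfrak a_i}(A)=\uparrow_{\mathfrak a_i}A$ together with (a), and (d) restricts the defining biconditional along $\tau_{\mathfrak a_1}\subseteq\tau_{\mathfrak a_2}$; your extra direct argument for (b) and the hull identity $K_{\mathfrak a_1}\circ K_{\mathfrak a_2}=K_{\mathfrak a_1}$ are also valid but not needed.
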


\begin{proof}
(a) Assume $\mathfrak a_2\sqsubseteq\mathfrak a_1$.  By definition, $\mathfrak a_2(x) \subseteq\mathfrak a_1(x)$ for every $x \in X$. Hence, if $xR_{\mathfrak a_2}y$, then $y \in \mathfrak a_2(x) \subseteq \mathfrak a_1(x)$, so $xR_{\mathfrak a_1}y$. Therefore, $R_{\mathfrak a_2} \subseteq R_{\mathfrak a_1}$. Let $a \preccurlyeq_{\mathfrak a_2} b$, then there exist $a = x_1, x_2, \ldots, x_n = b$ such that $x_i R_{\mathfrak a_2}x_{i+1}$ for $1 \leq i \leq n-1$. Since $R_{\mathfrak a_2} \subseteq R_{\mathfrak a_1}$, $\preccurlyeq_{\mathfrak a_2}\subseteq\preccurlyeq_{\mathfrak a_1}$.

(b) Let $U \in \tau_{\mathfrak a_1}$. By Theorem \ref{thm:alex}, $U$ is an upset for $\preccurlyeq_{\mathfrak a_1}$. Since $\preccurlyeq_{\mathfrak a_2} \subseteq \preccurlyeq_{\mathfrak a_1}$, $U$ is an upset for $\preccurlyeq_{\mathfrak a_2}$. Hence, $\tau_{\mathfrak a_1}\subseteq\tau_{\mathfrak a_2}$.

(c) let $y \in K_{\mathfrak a_2}(A)$, then $x \preccurlyeq_{\mathfrak a_2}y$ for some $x \in A$. By (a), $x \preccurlyeq_{\mathfrak a_1} y$, so $y \in K_{\mathfrak a_1}(A)$. Thus, $K_{\mathfrak a_2}(A)\subseteq K_{\mathfrak a_1}(A)$.

(d) Suppose $A \equiv_{\mathfrak a_2}B$. By (b), $\tau_{\mathfrak a_1} \subseteq \tau_{\mathfrak a_2}$. Therefore, for every $U \in \tau_{\mathfrak a_1}$,
\[
A \subseteq U \iff B \subseteq U,
\]
Hence $A \equiv_{\mathfrak a_1}B$, proving $\equiv_{\mathfrak a_2}\ \subseteq\ \equiv_{\mathfrak a_1}$. 
\end{proof}

\begin{theorem} \label{thm:scope-classification}
Let $\mathfrak a$ and $\mathfrak b$ be scope functions on the same set $X$. The following are equivalent:
\begin{enumerate}[label=(\roman*)]
\item $\equiv_{\mathfrak a} = \equiv_{\mathfrak b}$ on $\Pset(X)$;
\item $K_{\mathfrak a} = K_{\mathfrak b}$;
\item $S_{\mathfrak a}^{\infty}(A) = S_{\mathfrak b}^{\infty}(A)$ for every $A \subseteq X$;
\item $\preccurlyeq_{\mathfrak a} = \preccurlyeq_{\mathfrak b}$;
\item $\tau_{\mathfrak a} = \tau_{\mathfrak b}$;
\item $R_{\mathfrak a}^{*} = R_{\mathfrak b}^{*}$.
\end{enumerate}
\end{theorem}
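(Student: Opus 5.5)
The plan is to prove a cycle of implications that routes everything through the hull $\Ka$, using Theorem \ref{thm:main-equivalence}, Corollary \ref{cor:minnbhd} and Theorem \ref{thm:alex}. First, (iv)$\iff$(vi) is immediate, because $\preca$ is by definition $R_{\mathfrak a}^{*}$ and likewise for $\mathfrak b$. Next, (ii)$\iff$(iii) holds because Theorem \ref{thm:main-equivalence} gives $\Ka(A)=\Sainf(A)$ and $K_{\mathfrak b}(A)=S_{\mathfrak b}^{\infty}(A)$ for every $A$.

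Then I would close the following cycle:
\[
\text{(iv)} \Rightarrow \text{(v)} \Rightarrow \text{(ii)} \Rightarrow \text{(i)} \Rightarrow \text{(ii)} \Rightarrow \text{(iv)}.
\]
The steps are as follows.

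\emph{(iv)$\Rightarrow$(v).} This follows from Theorem \ref{thm:alex}, since $\taua=\Up(X,\preca)$ and $\tau_{\mathfrak b}=\Up(X,\preccurlyeq_{\mathfrak b})$.

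\emph{(v)$\Rightarrow$(ii).} The hull $\Ka(A)$ is defined purely from the topology, as the intersection of the open supersets of $A$. Equal topologies therefore give equal hulls.

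\emph{(ii)$\Rightarrow$(i).} By Theorem \ref{thm:main-equivalence}, $A\eqa B\iff \Ka(A)=\Ka(B)$, and the same holds for $\mathfrak b$. Equal hull maps therefore give equal relations.

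\emph{(i)$\Rightarrow$(ii).} This is the main point, since an equivalence relation alone does not obviously determine its hull. I would use Corollary \ref{cor:largest}: $\Ka(A)$ is the largest member of the $\eqa$-class of $A$. The class of $A$ is the same set for both relations, so its largest element is the same, giving $\Ka(A)=K_{\mathfrak b}(A)$. I expect this to be the one step needing care. The key observation is that a class has a maximum under inclusion, namely the union of its members, and this maximum is intrinsic to the class.

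\emph{(ii)$\Rightarrow$(iv).} Apply (ii) to singletons. By Corollary \ref{cor:minnbhd}, $\Ka(\{x\})=\upar\{x\}=\{y: x\preca y\}$. Hence
\[
x\preca y \iff y\in \Ka(\{x\})=K_{\mathfrak b}(\{x\}) \iff x\preccurlyeq_{\mathfrak b} y.
\]

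Combining the cycle with the two direct equivalences yields all six equivalences. The only caveat is that $\mathfrak a,\mathfrak b$ may be scope functions for different ambient topologies on $X$. This is harmless, because every object involved depends only on the scope relation.
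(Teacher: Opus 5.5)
Your proof is correct. It uses the same ingredients as the paper: Theorem \ref{thm:alex}, Theorem \ref{thm:main-equivalence}, Corollary \ref{cor:largest}, and the singleton computation $\Ka(\{x\})=\upar\{x\}$. You arrange them more economically, in two places.

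First, the paper proves (iv)$\iff$(v) in both directions. It gets (v)$\Rightarrow$(iv) by identifying $\preca$ and $\preccurlyeq_{\mathfrak b}$ as the specialization preorders of $\taua$ and $\tau_{\mathfrak b}$. It gets (iv)$\Rightarrow$(ii) separately, by matching singleton hulls and invoking complete additivity of the hull. Your cycle (iv)$\Rightarrow$(v)$\Rightarrow$(ii)$\Rightarrow$(iv) replaces both arguments with the tautological remark that $\Ka$ is defined from $\taua$ alone.

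Second, for (i)$\Rightarrow$(ii) the paper works only with singletons. It shows $K_{\mathfrak b}(\{x\})$ lies in the $\eqa$-class of $\{x\}$ and is therefore contained in $\Ka(\{x\})$, argues symmetrically, and then extends to arbitrary $A$ by complete additivity. You apply Corollary \ref{cor:largest} directly to an arbitrary $A$: $[A]_{\mathfrak a}$ and $[A]_{\mathfrak b}$ are the same family of sets, and a greatest element under inclusion is unique. This avoids additivity altogether.

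What the paper's arrangement buys is a standalone proof of each pairwise equivalence, in particular the fact that $\taua$ recovers $\preca$ via specialization. Your route buys brevity and fewer dependencies. Your remark that $\mathfrak a$ and $\mathfrak b$ may live over different ambient topologies is also apt and harmless, since every object in (i)--(vi) depends only on the scope map itself.
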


\begin{proof}
(ii) $\iff$ (iii) By Theorem \ref{thm:main-equivalence}, for every $A \subseteq X$,
\[
K_{\mathfrak a}(A) = S_{\mathfrak a}^{\infty}(A),
\qquad K_{\mathfrak b}(A) = S_{\mathfrak b}^{\infty}(A).
\]

(ii) $\Longleftrightarrow$ (iv) By Theorem \ref{thm:main-equivalence},
\[
K_{\mathfrak a}(\{x\}) = \{y:x\preccurlyeq_{\mathfrak a}y\},
\qquad
K_{\mathfrak b}(\{x\}) = \{y:x \preccurlyeq_{\mathfrak b}y\}.
\]
Hence, if $K_{\mathfrak a} = K_{\mathfrak b}$, then
\[
x\preccurlyeq_{\mathfrak a}y \iff x \preccurlyeq_{\mathfrak b}y.
\]
Conversely, if $\preccurlyeq_{\mathfrak a}=\preccurlyeq_{\mathfrak b}$, then
\[
K_{\mathfrak a}(\{x\}) = K_{\mathfrak b}(\{x\})
\]
for every $x \in X$. By complete additivity,
\[
K_{\mathfrak a}(A) = \bigcup_{x\in A}K_{\mathfrak a}(\{x\}) =
\bigcup_{x\in A}K_{\mathfrak b}(\{x\}) = K_{\mathfrak b}(A)
\]
for every $A \subseteq X$.

(iv) $\Longleftrightarrow$ (v) By Theorem \ref{thm:alex},
\[
\tau_{\mathfrak a} = \Up(X, \preccurlyeq_{\mathfrak a}), \quad \tau_{\mathfrak b} = \Up(X, \preccurlyeq_{\mathfrak b}).
\]
Hence, if $\preccurlyeq_{\mathfrak a} = \preccurlyeq_{\mathfrak b}$, then $\tau_{\mathfrak a} = \tau_{\mathfrak b}$. Conversely, suppose that $\tau_{\mathfrak a} = \tau_{\mathfrak b}$. 
By Theorem~\ref{thm:alex}, these topologies have specialization preorders $\preccurlyeq_{\mathfrak a}$ and $\preccurlyeq_{\mathfrak b}$, respectively. Thus,

\[
x\preccurlyeq_{\mathfrak a}y \iff (\forall U \in \tau_{\mathfrak a}) \bigl(x \in U \Rightarrow y \in U\bigr),
\]
similarly,
\[
x \preccurlyeq_{\mathfrak b}y \iff (\forall U\in\tau_{\mathfrak b}) \bigl(x\in U\Rightarrow y \in U\bigr).
\]
Thus,
\[
x \preccurlyeq_{\mathfrak a}y \iff x\preccurlyeq_{\mathfrak b}y
\]
for all $x, y \in X$. Therefore,
\[
\preccurlyeq_{\mathfrak a}=\preccurlyeq_{\mathfrak b}.
\]

(vi) $\iff$ (iv) By definition, $\preccurlyeq_{\mathfrak a} = R_{\mathfrak a}^{*}$ and $\preccurlyeq_{\mathfrak b} = R_{\mathfrak b}^{*}$

(ii) $\Longrightarrow$ (i) By Theorem \ref{thm:main-equivalence},
\[
A \equiv_{\mathfrak a}B \iff K_{\mathfrak a}(A) = K_{\mathfrak a}(B) \iff K_{\mathfrak b}(A) = K_{\mathfrak b}(B) \iff A \equiv_{\mathfrak b}B.
\]

(i) $\Longrightarrow$ (ii) Assume (i) by Corollary \ref{cor:largest}, for every $x \in X$
\[
K_{\mathfrak a}(\{x\}) \equiv_{\mathfrak a} \{x\} \iff K_{\mathfrak b}(\{x\}) \equiv_{\mathfrak b} \{x\}
\]
Since $K_{\mathfrak a}(\{x\})$ is the largest set such that $K_{\mathfrak a}(\{x\}) \equiv_{\mathfrak a} \{x\}$, we must have $K_{\mathfrak b}(\{x\}) \subseteq K_{\mathfrak a}(\{x\})$. Similarly, $K_{\mathfrak a}(\{x\}) \subseteq K_{\mathfrak b}(\{x\})$. Thus, $K_{\mathfrak a}(\{x\}) = K_{\mathfrak b}(\{x\})$. By complete additivity,
\[
K_{\mathfrak a}(A) = \bigcup_{x\in A}K_{\mathfrak a}(\{x\}) = \bigcup_{x\in A}K_{\mathfrak b}(\{x\}) = K_{\mathfrak b}(A)
\]
for every $A\subseteq X$.
\end{proof}

\begin{example} \label{ex:same-reach}
Let $X=\{p,q,r\}$ carry the discrete topology. Define
\[
\begin{array}{lll}
\mathfrak a(p) = \{p,q\},&\mathfrak a(q) = \{q,r\},&\mathfrak a(r) = \{r\},\\
 \mathfrak b(p) = X,&\mathfrak b(q) = \{q,r\},&\mathfrak b(r) = \{r\}.
\end{array}
\]
The two scope functions are different, since
\[
r \in \mathfrak b(p) \qquad\text{but}\qquad r\notin \mathfrak a(p).
\]
However, under $\mathfrak a$, the point $r$ is still reachable from $p$ through
\[
pR_{\mathfrak a}q \qquad\text{and}\qquad qR_{\mathfrak a}r.
\]
Thus, in both cases,
\[
p \preccurlyeq q \preccurlyeq r,
\]
and consequently
\[
\preccurlyeq_{\mathfrak a} = \preccurlyeq_{\mathfrak b}.
\]
Hence, by Theorem \ref{thm:scope-classification},
\[
\tau_{\mathfrak a} = \tau_{\mathfrak b} \qquad\text{and}\qquad \equiv_{\mathfrak a} = \equiv_{\mathfrak b}.
\]
Therefore, although the two scope functions are different, they induce the same reachability preorder, the same aura topology, and the same aura-Levine equivalence.
\end{example}
\section{Aura-continuous maps and preservation of Levine equivalence} \label{sex:8}

A map $f:(X,\tau,\mathfrak a)\to(Y,\sigma,\mathfrak b)$ is said to be aura-continuous \cite{AcikgozAura2026} when it is continuous from $(X,\taua)$ to $(Y,\tau_{\mathfrak b})$ .

\begin{theorem} \label{thm:monotone}
A function $f:X \to Y$ is aura-continuous if and only if
\[
x \preca y \quad \Longrightarrow\quad f(x)\preccurlyeq_{\mathfrak b}f(y).
\]
\end{theorem}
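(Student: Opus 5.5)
We reduce the statement to the standard fact that continuous maps between Alexandrov spaces are exactly the monotone maps between the underlying preorders. Theorem \ref{thm:alex} identifies $\taua=\Up(X,\preca)$ and $\tau_{\mathfrak b}=\Up(Y,\preccurlyeq_{\mathfrak b})$, so aura-continuity means: for every $\preccurlyeq_{\mathfrak b}$-upset $V\subseteq Y$, the preimage $f^{-1}(V)$ is a $\preca$-upset of $X$. We prove the two implications separately.

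For the forward direction, assume $f$ is aura-continuous and let $x\preca y$. Take the principal upset $V=\mathord{\uparrow}_{\mathfrak b}\{f(x)\}$. By Corollary \ref{cor:minnbhd} (applied to $\mathfrak b$), $V$ is the smallest $\mathfrak b$-open neighbourhood of $f(x)$, so in particular $V\in\tau_{\mathfrak b}$. Continuity then gives $f^{-1}(V)\in\taua$, and by Theorem \ref{thm:alex} this preimage is a $\preca$-upset. Since $x\in f^{-1}(V)$ and $x\preca y$, we get $y\in f^{-1}(V)$, that is, $f(x)\preccurlyeq_{\mathfrak b} f(y)$.

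For the converse, assume $f$ is monotone and let $V\in\tau_{\mathfrak b}$. Suppose $x\in f^{-1}(V)$ and $x\preca y$. Monotonicity gives $f(x)\preccurlyeq_{\mathfrak b} f(y)$. Since $V$ is a $\preccurlyeq_{\mathfrak b}$-upset, $f(y)\in V$, so $y\in f^{-1}(V)$. Hence $f^{-1}(V)$ is a $\preca$-upset, and Theorem \ref{thm:alex} puts it in $\taua$. This proves aura-continuity.

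The only point needing care is the forward direction, where we need a single $\mathfrak b$-open set that witnesses the order relation $f(x)\preccurlyeq_{\mathfrak b} f(y)$. The principal upset supplied by Corollary \ref{cor:minnbhd} does this, because its openness relies on $\tau_{\mathfrak b}$ being Alexandrov. Everything else is a direct unwinding of the definitions.
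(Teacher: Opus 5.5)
Your proof is correct and follows the paper's argument: both use Theorem \ref{thm:alex} to read $\tau_{\mathfrak a}$ and $\tau_{\mathfrak b}$ as upset topologies, and your converse matches the paper's. The only difference is cosmetic and lies in the forward direction. The paper shows that every $\mathfrak b$-open set containing $f(x)$ contains $f(y)$ and then reads this off as $f(x)\preccurlyeq_{\mathfrak b} f(y)$ via the specialization preorder, whereas you test against the single principal upset $\mathord{\uparrow}_{\mathfrak b}\{f(x)\}$, which makes that final identification explicit.
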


\begin{proof}
By Theorem \ref{thm:alex}, 
\[
\taua = \Up(X, \preca), \qquad \tau_{\mathfrak b} = \Up(Y, \preccurlyeq_{\mathfrak b}).
\]

Assume that $f$ is aura-continuous, and let $x \preca y$. We want to show that $f(x) \preccurlyeq_{\mathfrak b}f(y)$, let $V \in \tau_{\mathfrak b}$ be any aura-open set containing $f(x)$. Since $f$ is aura-continuous, $f^{-1}(V) \in \taua$.
Since $x \in f^{-1}(V)$ and $x \preca y$, the upset property of $f^{-1}(V)$ yields $y \in f^{-1}(V)$.  Hence $f(y) \in V$. Thus every $\mathfrak b$-open set containing $f(x)$ also contains $f(y)$, which means $f(x)\preccurlyeq_{\mathfrak b}f(y)$.

Conversely, suppose $f$ is monotone with respect to preorders. Let $V \in\tau_{\mathfrak b}$. We show that $f^{-1}(V)$ is a $\preca$-upset.  If $x \in f^{-1}(V)$ and $x \preca y$, then $f(x) \in V$ and monotonicity gives $f(x) \preccurlyeq_{\mathfrak b}f(y)$. Since $V$ is a $\preccurlyeq_{\mathfrak b}$-upset, $f(y) \in V$, so $y \in f^{-1}(V)$. Hence $f^{-1}(V) \in \taua$ by Theorem \ref{thm:alex}. Therefore $f$ is aura-continuous.
\end{proof}

\begin{theorem} \label{thm:image-preserve}
Let $f:(X,\tau,\mathfrak a) \to(Y,\sigma,\mathfrak b)$ be aura-continuous. If $A \eqa B$, then
\[
f(A) \equiv_{\mathfrak b} f(B).
\]
Hence there is a well-defined map
\[
\overline f:\LQ\to \mathsf L_{\mathfrak b}(Y), \qquad [A]_{\mathfrak a} \longmapsto[f(A)]_{\mathfrak b}.
\]
\end{theorem}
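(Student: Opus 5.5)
The plan is to reduce the claim to the hull identity of Theorem \ref{thm:main-equivalence}. Specifically, I will show that for every $A\subseteq X$
\[
K_{\mathfrak b}(f(A)) = K_{\mathfrak b}\bigl(f(\Ka(A))\bigr),
\]
that is, $\upar$-closing $A$ before applying $f$ does not change the $\mathfrak b$-hull of the image. Once this identity is available, the statement follows at once. If $A\eqa B$, Theorem \ref{thm:main-equivalence} gives $\Ka(A)=\Ka(B)$, so
\[
K_{\mathfrak b}(f(A))=K_{\mathfrak b}(f(\Ka(A)))=K_{\mathfrak b}(f(\Ka(B)))=K_{\mathfrak b}(f(B)).
\]
A second application of Theorem \ref{thm:main-equivalence}, now in $(Y,\sigma,\mathfrak b)$, yields $f(A)\equiv_{\mathfrak b}f(B)$.

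To prove the identity, I start from $A\subseteq\Ka(A)$. Monotonicity of images together with Proposition \ref{prop:hull-algebra}(b) for $K_{\mathfrak b}$ gives $K_{\mathfrak b}(f(A))\subseteq K_{\mathfrak b}(f(\Ka(A)))$.

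For the reverse inclusion it suffices to prove $f(\Ka(A))\subseteq K_{\mathfrak b}(f(A))$. Applying $K_{\mathfrak b}$ and using idempotence, Proposition \ref{prop:hull-algebra}(c), then finishes. Let $y\in\Ka(A)=\upar A$. By the representation $\Ka(A)=\upar A$, there is some $x\in A$ with $x\preca y$. Aura-continuity in the form of Theorem \ref{thm:monotone} gives $f(x)\preccurlyeq_{\mathfrak b}f(y)$ with $f(x)\in f(A)$. Hence $f(y)\in\mathord{\uparrow}_{\mathfrak b}f(A)$, and this set equals $K_{\mathfrak b}(f(A))$ by Theorem \ref{thm:main-equivalence} applied in $Y$.

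As an alternative that avoids preorders, $f^{-1}(K_{\mathfrak b}(f(A)))$ is $\mathfrak a$-open, because $K_{\mathfrak b}(f(A))\in\tau_{\mathfrak b}$ by Proposition \ref{prop:hull-algebra}(e) and $f$ is continuous. This preimage contains $A$, so it contains $\Ka(A)$, which gives the same inclusion.

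The well-definedness of $\overline f$ is then just a restatement: the assignment $[A]_{\mathfrak a}\mapsto[f(A)]_{\mathfrak b}$ does not depend on the representative, by the implication just proved. The only step requiring care is the reverse inclusion above, and specifically the point that the image of a $\preca$-upset lands inside the $\preccurlyeq_{\mathfrak b}$-upset generated by the image. This is exactly the monotonicity supplied by Theorem \ref{thm:monotone}. I do not expect any genuine obstacle beyond that step.
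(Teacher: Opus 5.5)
Your proof is correct, but it takes a different route from the paper. The paper argues directly from the definition of Levine equivalence. For $V\in\tau_{\mathfrak b}$ one has $f(A)\subseteq V\iff A\subseteq f^{-1}(V)$. Since $f^{-1}(V)\in\taua$, the hypothesis $A\eqa B$ immediately yields $f(A)\subseteq V\iff f(B)\subseteq V$. That argument uses only continuity, with no Alexandrov structure and no hull identities. So it shows that continuous maps between arbitrary topological spaces preserve Levine equivalence.

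You instead work at the level of hulls. You invoke Theorem \ref{thm:main-equivalence} in both spaces, parts (b), (c), (e) of Proposition \ref{prop:hull-algebra}, and Theorem \ref{thm:monotone} (or the openness of $K_{\mathfrak b}(f(A))$). This is heavier machinery for the bare implication. Its payoff is that it isolates the inclusion $f(\Ka(A))\subseteq K_{\mathfrak b}(f(A))$: $f$ carries the eventual forward spread of $A$ into that of $f(A)$. This is the hull analogue of $f(\operatorname{cl}A)\subseteq\operatorname{cl}f(A)$ and fits the paper's reachability viewpoint.

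Even this inclusion does not need the Alexandrov property. Take a continuous $f:(X,\tau_1)\to(Y,\tau_2)$ and any $V\in\tau_2$ with $f(A)\subseteq V$. Then $K_{\tau_1}(A)\subseteq f^{-1}(V)$, and intersecting over all such $V$ gives $f(K_{\tau_1}(A))\subseteq K_{\tau_2}(f(A))$ for Levine hulls in any topology.
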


\begin{proof}
Let $V \in \tau_{\mathfrak b}$. For every subset $A \subseteq X$,
\[
f(A)\subseteq V \iff A \subseteq f^{-1}(V).
\]
Because $f$ is aura-continuous, $f^{-1}(V)\in\taua$. If $A\eqa B$, then
\[
A \subseteq f^{-1}(V) \iff B\subseteq f^{-1}(V).
\]
\[
B \subseteq f^{-1}(V) \iff f(B)\subseteq V.
\]
Thus, for every $V \in \tau_{\mathfrak b}$,
\[
f(A) \subseteq V \iff f(B) \subseteq V.
\]
By definition, that is 
\[
f(A) \equiv_{\mathfrak b}f(B).
\]
Hence $[f(A)]_{\mathfrak b}$ depends only on the class $[A]_{\mathfrak a}$, so $\overline f$ is well defined.
\end{proof}

\section{Finite aura spaces and computation} \label{sex:9}

Finite topological spaces are naturally encoded by preorders and, after $T_0$ reduction, by posets \cite{Stong1966,McCord1966,Barmak2011}.  Aura spaces make this encoding completely explicit through the scope graph.

Let $|X| = n$ and enumerate $X = \{x_1,\dots,x_n\}$. Write $M_{\mathfrak a}$ for the Boolean adjacency matrix
\[
(M_{\mathfrak a})_{ij} = 1\iff x_j\in\mathfrak a(x_i).
\]
Let $T_{\mathfrak a}$ denote its Boolean reflexive-transitive closure.

The next algorithm computes the aura-Levine equivalence.
\begin{algorithm} \label{alg:finite}
For finite $X$:
\begin{enumerate}
\item Build $M_{\mathfrak a}$ from the scope table.
\item Compute $T_{\mathfrak a}$ by Warshall's algorithm in $O(n^3)$ time \cite{Warshall1962}.
\item For $A \subseteq X$, compute the Boolean vector
\[
h_A = \bigvee_{x_i \in A}\operatorname{row}_i(T_{\mathfrak a}).
\]
Here, when $A=\varnothing$, the empty Boolean join is understood to be the zero vector. Then $h_A$ is the characteristic vector of $\Ka(A)$.
\item Decide $A\eqa B$ by testing $h_A = h_B$.
\end{enumerate}
\end{algorithm}

Note that $(T_{\mathfrak a})_{ij}=1$ if and only if
$x_i\preccurlyeq_{\mathfrak a}x_j$. Hence the $i$th row of
$T_{\mathfrak a}$ is the characteristic vector of
$K_{\mathfrak a}(\{x_i\})$. By complete additivity,
$h_A$ is therefore the characteristic vector of $K_{\mathfrak a}(A)$, and Theorem \ref{thm:main-equivalence} gives
\[
A \equiv_{\mathfrak a}B \quad \Longleftrightarrow \quad h_A = h_B.
\]
After $T_{\mathfrak a}$ has been computed once, each equivalence test therefore reduces to forming the relevant Boolean row joins and comparing the resulting $n$-bit vectors.

The next example illustrates how the algorithm works.
\begin{example} \label{ex:algorithm}

Let $X = \{p,q,r,s\}$ carry the discrete topology, and define the scope function $\mathfrak a$ by
\[
\mathfrak a(p) = \{p,q\},\quad
\mathfrak a(q) = \{q,r\},\quad
\mathfrak a(r) = \{r,s\},\quad
\mathfrak a(s) = \{s\}.
\]

We apply Algorithm \ref{alg:finite} step by step.

\noindent \textbf{Step 1. Construct the scope matrix.}
Using the ordering $p,q,r,s$, the Boolean matrix $M_{\mathfrak a}$ is defined by
\[
(M_{\mathfrak a})_{ij} = 1 \iff x_j \in \mathfrak a(x_i).
\]
Hence,
\[
M_{\mathfrak a} =
\begin{pmatrix}
1&1&0&0\\
0&1&1&0\\
0&0&1&1\\
0&0&0&1
\end{pmatrix}.
\]
For instance, the first row records $\mathfrak a(p)=\{p,q\}$, while the second row records $\mathfrak a(q)=\{q,r\}$.

\noindent \textbf{Step 2. Compute the reflexive--transitive closure.}
We have $r \notin \mathfrak a(p)$, but $q \in \mathfrak a(p)$ and $r \in \mathfrak a(q)$. Thus, $p\preca r$. Similarly, $p \preca s$ and $q \preca s$. Thus, the Boolean reflexive-transitive closure of $M_{\mathfrak a}$ is
\[
T_{\mathfrak a} =
\begin{pmatrix}
1&1&1&1\\
0&1&1&1\\
0&0&1&1\\
0&0&0&1
\end{pmatrix}.
\]

Each row of $T_{\mathfrak a}$ corresponds to a singleton hull. Therefore,
\[
K_{\mathfrak a}(\{p\}) = \{p,q,r,s\},
\]
\[
K_{\mathfrak a}(\{q\}) = \{q,r,s\},
\]
\[
K_{\mathfrak a}(\{r\}) = \{r,s\},
\quad K_{\mathfrak a}(\{s\}) = \{s\}.
\]

\noindent \textbf{Step 3. Compute the hull vector of a subset.}
Let $A = \{q,s\}$. The rows corresponding to $q$ and $s$ are
\[
\operatorname{row}_{q}(T_{\mathfrak a}) = (0,1,1,1)
\]
and
\[
\operatorname{row}_{s}(T_{\mathfrak a}) = (0,0,0,1).
\]
Taking their coordinatewise Boolean join gives
\[
h_A = (0,1,1,1) \vee(0,0,0,1) = (0,1,1,1).
\]
Hence
\[
K_{\mathfrak a}(A) = \{q,r,s\}.
\]

Now let $B = \{q\}$. Then
\[
h_B = \operatorname{row}_{q}(T_{\mathfrak a}) = (0,1,1,1),
\]
and therefore
\[
K_{\mathfrak a}(B)=\{q,r,s\}.
\]

\noindent \textbf{Step 4. Decide aura-Levine equivalence.}
Since $h_A = h_B$, Algorithm~\ref{alg:finite} yields
\[
A\equiv_{\mathfrak a}B.
\]

This illustrates that two subsets need not be equal in order to be aura-Levine equivalent. 

For comparison, let $C = \{r\}$. Then $h_C = (0,0,1,1)$, so
\[
h_C \neq h_B.
\]
Hence,
\[
\{r\} \not \equiv_{\mathfrak a}\{q\}.
\]

Therefore, after $T_{\mathfrak a}$ has been computed once, deciding whether two subsets are aura-Levine equivalent reduces simply to forming the Boolean joins of the appropriate rows and comparing the resulting vectors.
\end{example}


\section{Conclusion}
In this paper, we studied Levine equivalence in aura topological spaces through the reachability structure induced by the scope function. We showed that the aura topology is an Alexandrov topology determined by the reachability preorder and that the aura-Levine hull of a subset coincides with its eventual forward spread. Consequently, two subsets are aura-Levine equivalent precisely when they generate the same eventual spread.

We also investigated the algebraic and order-theoretic structure of the resulting equivalence classes, including the quotient frame and its Heyting structure, and characterized several separation properties in terms of the reachability preorder. In addition, we classified scope functions that induce the same aura-Levine equivalence, studied the behavior of this equivalence under aura-continuous mappings, and provided a finite procedure for deciding aura-Levine equivalence by means of the reflexive--transitive closure of the scope relation.

These results show that reachability provides a unified framework for understanding both the topology and the observational equivalence generated by a scope function. Further work may investigate analogous equivalence structures in fuzzy, soft, and other generalized forms of aura topological spaces.

\section*{Declarations}
\noindent\textbf{Conflict of interest.} The author declares no conflict of interest.\\

\noindent\textbf{Data availability.} No data were generated or analyzed in this theoretical study.\\

\noindent\textbf{Funding.} No external funding is declared.\\

\noindent\textbf{Author contributions.} The author conceived the study, developed the results, wrote the manuscript, and approved the final version.

\end{document}